\documentclass{article}
\pdfoutput=1
\usepackage{mystyle}
\usepackage{enumitem}
\usepackage{authblk}
\usepackage{amsmath,amsthm}
\usepackage{empheq}
\usepackage{graphicx}
\usepackage{bm}
\usepackage{subfigure}
\usepackage[algoruled,linesnumbered]{algorithm2e}

\SetCommentSty{mycommfont}  

\crefname{equation}{}{}

\newcommand{\customref}[2]{\textup{(}\hyperref[#1]{\textup{#2}}\textup{)}}

\title{Localization of point scatterers via sparse optimization on measures}
\date{\today}
\author{Giovanni S. Alberti, Romain Petit\footnote{Corresponding author (email: \texttt{romain.petit@edu.unige.it}).}~, Matteo Santacesaria}
\affil{MaLGa Center, Department of Mathematics, University of Genoa\\Via Dodecaneso 35, 16146 Genoa, Italy}

\begin{document}
\maketitle
\begin{abstract}
We consider the inverse scattering problem for time-harmonic acoustic waves in a medium with pointwise inhomogeneities. In the Foldy-Lax model, the estimation of the scatterers' locations and intensities from far field measurements can be recast as the recovery of a discrete measure from nonlinear observations. We propose a \enquote{linearize and locally optimize} approach to perform this reconstruction. We first solve a convex program in the space of measures (known as the Beurling LASSO), which involves a linearization of the forward operator (the far field pattern in the Born approximation). Then, we locally minimize a second functional involving the nonlinear forward map, using the output of the first step as initialization. We provide guarantees that the output of the first step is close to the sought-after measure when the scatterers have small intensities and are sufficiently separated. We also provide numerical evidence that the second step still allows for accurate recovery in settings that are more involved.
\end{abstract}

\section{Introduction}
The localization of inhomogeneities included in a medium has numerous applications in imaging. These inclusions might model defects in material science or tumors in medical imaging. A popular method for detecting them is to exploit scattering phenomena, which originate from the interaction of waves propagating in the medium with the inhomogeneities. By sending incident waves and measuring the scattered ones away from the medium, an observer can aim at estimating the localization and characteristics of the inclusions. This reconstruction problem is called the \emph{inverse scattering problem} \cite{coltonInverseAcousticElectromagnetic2012}. In this work, we focus on the case of time-harmonic acoustic incident waves.

In order to solve the inverse scattering problem with few measurements, one needs to make strong assumptions on the type of inhomogeneities included in the medium. The simplest case is arguably the one of very small inclusions, which can be approximately modeled by point-like inhomogeneities. This particular setting has been the subject of numerous works; see for example \cite{devriesPointScatterersClassical1998,martinMultipleScatteringInteraction2006,albeverioSolvableModelsQuantum2012}. The classical model for the scattering of acoustic waves by point-like inclusions, known as \emph{Foldy-Lax model}, was formally introduced in~\cite{foldyMultipleScatteringWaves1945} (see also \cite{laxMultipleScatteringWaves1951}). In this framework, the inverse scattering problem can be recast as the recovery of a discrete measure, which encodes the locations and the intensities of the scatterers, from nonlinear measurements.

In this article, we propose to connect this inverse problem to the \emph{sparse spikes problem}, which has attracted a lot of attention in the past ten years (see e.g. \cite{decastroExactReconstructionUsing2012,brediesInverseProblemsSpaces2013} and the review \cite{lavilleOffTheGridVariationalSparse2021}). These works investigated the possibility of recovering a discrete measure from noisy linear observations. Although the measurements we consider in the context of inverse scattering are nonlinear, we propose to investigate how the guarantees and reconstruction methods developed for the sparse spikes problem can still be leveraged.

\subsection{Problem formulation}
Before introducing the Foldy-Lax model for the scattering of acoustic waves by point-like inhomogeneities \cite{albeverioSolvableModelsQuantum2012}, we briefly describe the case of a piecewise continuous perturbation of the refractive index $q$. Let $d\in\{2,3\}$ be the ambient dimension. Considering time-harmonic acoustic incident waves with wavenumber~$\kappa>0$, the incident field $u^{\mathrm{in}}$ associated to an incident direction $\theta\in\mathbb{S}^{d-1}$ is given by
\begin{equation}
    \label{eq:uin}
    u^{\mathrm{in}}(y)=e^{i\kappa\theta\cdot y}\,.
\end{equation}
The total field $u^{\mathrm{tot}}$ measured by the observer, which is the sum of the incident field $u^{\mathrm{in}}$ and the scattered field~$u^s$, solves the Helmoltz equation \eqref{pde} with Sommerfeld radiation condition \eqref{radiation_cond}
\begin{align}[left=\empheqlbrace]
    &\Delta u^{\mathrm{tot}}(y)+\kappa^2(1+q(y))u^{\mathrm{tot}}(y)=0~~\mathrm{in}~\RR^d,\label{pde}\\
    &\underset{|y|\to+\infty}{\mathrm{lim}}~|y|^{(d-1)/2}\left(\frac{\partial u^s}{\partial |y|}(y)-i\kappa u^s(y)\right)=0\,.\label{radiation_cond}
\end{align}
Now, let us denote the intensities and the locations of a set of $s$ point scatterers by $a\in(\mathbb{C}^*)^s$ and $x\in(\RR^d)^s$, respectively, where $\mathbb{C}^*\eqdef \mathbb{C}\setminus\{0\}$ denotes the set of nonzero complex numbers. The Foldy-Lax model formally corresponds to taking $q=\sum_{i=1}^s a_i\delta_{x_i}$ in \eqref{pde}. In this model, the total field~$u^{\mathrm{tot}}$ is given by
\begin{equation*}
	u^{\mathrm{tot}}(y)=u^{\mathrm{in}}(y)+\sum\limits_{i=1}^s G(y,x_i) a_i u_i,\quad y\in\RR^d,
\end{equation*}
where $G$ is the Green's function of the Helmoltz equation (see \Cref{sec_notations}) and the coefficients~${u=(u_i)_{1\leq i \leq s}}$ are the solutions of the Foldy-Lax system
\begin{equation}
	u_i=u^{\mathrm{in}}(x_i)+\kappa^2\sum\limits_{\substack{j=1\\j\neq i}}^s G(x_i,x_j)a_j u_j,\quad i\in\{1,...,s\}.
	\label{foldy}
\end{equation}
We assume that $u^{\mathrm{tot}}$ can be measured by the observer at infinity, i.e.\ that we have access to the far field pattern~$u^{\infty}(\hat{x},\theta)$ for several incident directions $\theta$ and observations directions $\hat{x}$, given by
\begin{equation}
	u^{\infty}(\hat{x},\theta)\eqdef \frac{\kappa^2}{4\pi}\sum\limits_{i=1}^s a_{i} u_i e^{-i\kappa \hat{x}\cdot x_i},\quad (\hat{x},\theta)\in\mathbb{S}^{d-1}\times\mathbb{S}^{d-1}.
	\label{far_field}
\end{equation}

Our goal is to recover $a$ and $x$ from the knowledge of a possibly noisy version of $(u^{\infty}(\hat{x},\theta))_{(\hat{x},\theta)\in\Omega}$, with~$\Omega$ a finite subset of $\mathbb{S}^{d-1}\times\mathbb{S}^{d-1}$. In this work, we consider the case where $\Omega=\{(\hat x_k,\theta_k)\}_{k=1}^m$, for a suitable choice of incident directions $\theta_k$ and corresponding observation directions $\hat x_k$. In other words, we consider $m$ incident waves, and for each incident wave we measure the corresponding far-field pattern at a single point at infinity. We stress that the difficulty of this task stems from the nonlinear dependence of $u^{\infty}$ on $a$ and $x$ and on the availability of only finitely many measurements.

\paragraph{Derivation of the Foldy-Lax model as a vanishing size limit.}  As a rule of thumb, the point approximation for small inhomogeneities is valid as soon as they have sub-wavelength size. The rigorous derivation of the Foldy-Lax model as a limit case of scattering by inhomogeneities of vanishing size has been investigated in several works. If the homogeneities have a moderate contrast\footnote{See also \cite{capdeboscqExtendingRepresentationFormulas2022}, in which the same behavior arises from a particular scaling between the (high) contrast and the size of the inhomogeneities.}, the first term in the asymptotic expansion of the scattered field is indeed the one given by the Foldy-Lax model, but it goes to zero in the vanishing size limit~(see e.g.~\cite{cassierMultipleScatteringAcoustic2013,challaJustificationFoldyLax2014,bendaliApproximationMultipolesMultiple2016}). On the other hand, there exist critical scalings between the size and the contrast of the inhomogeneities under which \emph{resonating frequencies} appear. When using incident frequencies close to those, the Foldy-Lax model is valid and the limit scattered field is non-trivial (see e.g. \cite{ammariMinnaertResonancesAcoustic2018,ammariPointinteractionApproximationFields2019,ammariEquivalentMediaGenerated2020,alsenafiFoldyLaxApproximation2022,mantileOriginMinnaertResonances2022,ammariMathematicalAnalysisElectromagnetic2023}). In the acoustic setting, such resonance phenomena (called \emph{Minnaert resonances}) have important applications in medical ultrasonic imaging~\cite{erricoUltrafastUltrasoundLocalization2015}, where the small scatterers model micro bubbles. We also mention their use in optics, where the scatterers model dielectric nanoparticles~\cite{ammariSubwavelengthResonantDielectric2019}.

\subsection{Related works}

\paragraph{Reconstruction methods in inverse acoustic scattering.} There is a wide literature on the numerical resolution of the inverse acoustic scattering problem. We  mention here the three main approaches, namely iterative methods, decomposition methods, and sampling methods (see for example \cite[Sections 4-6 of Chapter~5 and~Sections 3-5 of Chapter 10]{coltonInverseAcousticElectromagnetic2012}). Iterative methods (see for example \cite{haberOptimizationTechniquesSolving2000,bakushinskyIterativeMethodsApproximate2005,kaltenbacherIterativeRegularizationMethods2008,kaltenbacherIterativeMethodsNonlinear2009}) solve the equation associated to the inverse problem~(in the case of noiseless observations) or a relevant optimization problem (in the case of noisy observations) by iteratively updating an initial guess of the inhomogeneity. Their main drawback is that, due to the nonlinearity of the forward map, convergence guarantees are only local. As a consequence, in order to provide a good initial guess, one needs extensive a priori information on the distribution of the scatterers. Decomposition methods, introduced in \cite{kirschIntegralEquationFirst1986,kirschNumericalMethodInverse1987,kirschOptimizationMethodInverse1987}, split the inverse problem into an ill-posed linear problem and a well-posed nonlinear problem. Just as iterative methods, they only offer local recovery guarantees, but avoid the high cost of solving the full forward problem at each iteration. Sampling methods~(see e.g.~\cite{coltonSimpleMethodSolving1996,cakoniQualitativeMethodsInverse2005,kirschFactorizationMethodInverse2007}) do not require extensive a priori knowledge of the inhomogeneity, but are of qualitative type: they mainly provide information on the location of the scatterers, but not on the perturbation of the refraction index. Finally, let us mention \cite{bellisAcousticInverseScattering2013}, which introduces a qualitative reconstruction method based on the notion of topological derivative.

\paragraph{Reconstruction of small inhomogeneities.} The reconstruction of small inhomogeneities has also attracted a lot of attention outside the Foldy-Lax model and the specific case of acoustic waves. We refer the reader to \cite{ammariReconstructionSmallInhomogeneities2004} and to the long list of references therein concerning the localization of small conductivity, elastic and electromagnetic inclusions. In these works, the contrast of the inhomogeneities is moderate, and the measurements are hence close to those corresponding to the medium without the inclusions. Reconstruction methods are mainly based on asymptotic expansions of the perturbation resulting from the presence of the inhomogeneities.

\paragraph{Born approximation.} When the scatterers have small intensities and are sufficiently separated, the solution of the Foldy-Lax system is close to $(u^{\mathrm{in}}(x_i))_{1\leq i\leq s}$ (see \Cref{sec_bound_lin_err} for more details). As a result, the far field pattern is close to the Fourier transform of~${\mu=\sum_{i=1}^s a_i \delta_{x_i}}$. Consequently, the recovery of $a$ and $x$ is a particular instance of the sparse spikes problem mentioned above. In the context of inverse scattering, most reconstruction methods are based on the MUltiple SIgnal Classification~(MUSIC) algorithm. This algorithm, first used in signal processing, was introduced for inverse scattering in \cite{lev-ariTimereversalTechniqueReinterpreted2000} (see also \cite[Chapter 10]{devaneyMathematicalFoundationsImaging2012}). As pointed out in e.g. \cite{cheneyLinearSamplingMethod2001,kirschFactorizationMethodInverse2007}, it has strong connections with sampling methods. Its noise robustness has been extensively studied in the literature on the sparse spikes problem. However, in the context of inverse scattering, we are not aware of any work leveraging these results to provide quantitative reconstruction guarantees accounting for the linearization error.

\paragraph{Inverse scattering in the nonlinear case.} To our knowledge, in the Foldy-Lax setting, most works also rely on the MUSIC algorithm \cite{gruberTimereversalImagingMultiple2004,devaneyTimereversalbasedImagingInverse2005,challaInverseScatteringPointlike2012}. Exact recovery guarantees exist in the noiseless case, but we are not aware of any sensitivity analysis in the case of noisy measurements. In \cite{gilbertNonlinearIterativeHard2020}, the scatterers are constrained to belong to a fixed finite grid on the domain. The convergence of a nonlinear iterative hard thresholding algorithm is then investigated. The authors obtain global convergence guarantees, under the assumption that the iterates remain bounded and some coherence estimates are satisfied by the forward operator. Checking the validity of these assumptions is however involved, and is the subject of an intensive numerical investigation.

\subsection{Contributions}
We propose a \enquote{linearize and locally optimize} approach to solve the considered inverse problem. First, we solve a convex program on the space of measures, which involves the linearized forward operator (the far field pattern in the Born approximation). Then, we use this first estimate to initialize a local optimization procedure involving the nonlinear forward operator.

This approach allows us to leverage several results on the recovery of a discrete measure from noisy linear measurements (or sparse spikes problem). To our knowledge, their application to inverse scattering is new. In particular, they allow us to obtain quantitative recovery guarantees, as well as lower bounds on the number of required measurements. To this aim, we show that the assumptions of \cite[Theorem~3]{poonGeometryOfftheGridCompressed2023} and \cite[Theorem 1]{poonSupportLocalizationFisher2019} are satisfied in the case of continuous Fourier sampling with frequencies uniformly distributed in a Euclidean ball of fixed radius. This setting was not covered by previous works, and is a result of independent interest.

We also extensively study the error committed by linearizing the forward operator. We treat the special case of two scatterers, and derive two different bounds in the general case. Numerical evidence suggests that their dependence on the wave number and on the minimal separation between the spikes is sharp.

We developed a JAX-based Python package implementing the proposed reconstruction method, which is publicly available at~\url{https://github.com/rpetit/pointscat}. It can run seamlessly on CPUs, GPUs and TPUs, and benefits from just-in-time compilation. 

\paragraph{Structure of the paper.} In \Cref{sec_sparse_spikes}, we provide a brief introduction to the sparse spikes problem. Then, we give an overview of the approach we propose to solve the inverse problem in \Cref{sec:overview}. \Cref{sec:rec} is dedicated to the derivation of theoretical guarantees for the linear part of our recovery procedure. Finally, we introduce our reconstruction algorithm in \Cref{sec_algo} and present numerical results on a few examples in \Cref{sec_num_res}.

\subsection{Notations}
\label{sec_notations}
\paragraph{General notations.} Let $\mathcal{X}$ be a connected bounded open subset of $\RR^d$ and denote the space of complex-valued Radon measures on $\mathcal{X}$  by $\mathcal{M}(\mathcal{X})$. The total variation of a measure $\nu\in\mathcal{M}(\mathcal{X})$ is denoted by $|\nu|(\mathcal{X})$. If $a\in\mathbb{C}^s$ and $1\leq i \leq s$, we define 
\begin{equation*}
	a_{-i}\eqdef(a_1,...,a_{i-1},a_{i+1},..,a_s)\in\mathbb{C}^{s-1}\,.
\end{equation*} 
When $x=(x_1,...,x_s)\in\mathcal{X}^s$, we sometimes use the notation $u^{\mathrm{in}}(x)\eqdef (u^{\mathrm{in}}(x_i))_{1\leq i\leq s}$. Finally, we use the notation $|\cdot|$ for the Euclidean norm in $\RR^d$.

\paragraph{Forward operators.} Given $s$ scatterers with intensities $a\in(\mathbb{C}^*)^s$ and locations $x\in\mathcal{X}^s$, we denote  the far field pattern in the Born approximation by $u^{\infty,b}$. Recalling \eqref{eq:uin} and \eqref{far_field}, the Born approximation~$u\approx u^{\mathrm{in}}(x)$ yields
\begin{equation}
	u^{\infty,b}(\hat{x},\theta)\eqdef\frac{\kappa^2}{4\pi}\sum\limits_{i=1}^s a_i e^{-i\kappa(\hat{x}-\theta)\cdot x_i},\quad (\hat{x},\theta)\in\mathbb{S}^{d-1}\times\mathbb{S}^{d-1}.
	\label{far_field_born}
\end{equation}
Given $m$ incident directions $(\theta_k)_{1\leq k\leq m}$ and corresponding  observation directions $(\hat{x}_k)_{1\leq k\leq m}$, we denote the associated forward operator on $\mathcal{M}(\mathcal{X})$ by $\Phi^b$, which is
\begin{equation*}
	\begin{aligned}
		\Phi^b \colon \mathcal{M}(\mathcal{X}) &  \to \mathbb{C}^m \\
		\mu & \mapsto \frac{1}{\sqrt{m}}\left[\int_{\mathcal{X}}e^{-i\kappa(\hat{x}_k-\theta_k)\cdot y}\,d\mu(y)\right]_{k=1}^m.
	\end{aligned}
\end{equation*}
Note that $\Phi^b$ is a linear map. Finally, we define the Born and Foldy forward operators associated to the arrangement of scatterers $x$:
\begin{equation*}
	\begin{aligned}
		\Phi^b_x \colon \mathbb{C}^s &  \to \mathbb{C}^m \\
		a & \mapsto \frac{1}{\sqrt{m}}\left[u^{\infty,b}(\hat{x}_k,\theta_k)\right]_{k=1}^m,
	\end{aligned}
	~~~~~
	\begin{aligned}
		\Phi^f_x \colon \mathbb{C}^s &  \to \mathbb{C}^m \\
		a & \mapsto \frac{1}{\sqrt{m}}\left[u^{\infty}(\hat{x}_k,\theta_k)\right]_{k=1}^m.
	\end{aligned}
\end{equation*}
With this notation, taking $\mu=\sum_{i=1}^s a_i\delta_{x_i}$ yields $\Phi^b_x a=\Phi^b\mu$.

\paragraph{Green functions.} We denote the Green function associated to the Helmoltz equation with wavenumber~$\kappa$ by  $G$, i.e.
\begin{equation*}
	G(x,y)\eqdef\begin{cases}
		\frac{i}{4}H_0^{(1)}(\kappa|x-y|)&\mathrm{if}~d=2\,,\\\frac{1}{4\pi|x-y|}e^{i\kappa|x-y|}&\mathrm{if}~d=3\,,
	\end{cases}\quad x,y\in\RR^d,
\end{equation*}
where $H_0^{(1)}$ denotes the Hankel function of the first kind and order zero. As we often need to upper bound the modulus of $G$, by \cite[page 446]{watsonTreatiseTheoryBessel1995} and \cite[Theorem A]{freitasSharpBoundsModulus2018} we have 
\begin{equation}
\label{eq:Gphi}
\forall x,y\in\RR^d,~|G(x,y)|\leq \phi(|x-y|),
\end{equation}
where $\phi\colon\RR_+^*\to \RR$ is the non-increasing function given by
\begin{equation}
	\phi(t)\eqdef\begin{cases}
		\frac{1}{4}\mathrm{min}\left(\sqrt{1+\frac{4}{\pi^2}(\gamma+\mathrm{log}(\kappa t/2))^2},\sqrt{\frac{2}{\pi \kappa t}}\right)&\mathrm{if}~d=2,\\\frac{1}{4\pi t}&\mathrm{if}~d=3,
	\end{cases}\quad t\in\RR_+^*,
	\label{eq:phi}
\end{equation}
where $\gamma$ is Euler's constant. We stress that the function $\phi$ depends on $\kappa$ when $d=2$.

\paragraph{Foldy matrix.} Given $a\in\mathbb{C}^s$ and $x\in(\mathbb{R}^{d})^s$, we define the matrix $T_{a,x}\in\mathbb{C}^{s\times s}$ by
\begin{equation*}
	(T_{a,x})_{ij}=\begin{cases}0&\mathrm{if}~i=j,\\G(x_i,x_j)a_j&\mathrm{otherwise,}\end{cases}\quad (i,j)\in\{1,...,s\}^2.
\end{equation*}
With this definition, the Foldy system \eqref{foldy} writes
\begin{equation}
\label{eq:foldycompact}
(Id-\kappa^2T_{a,x})u=u^{\mathrm{in}}(x)\,.
\end{equation}

\section{Background on the sparse spikes problem}
\label{sec_sparse_spikes}
Since our approach heavily relies on existing results on the sparse spikes problem,  in this section we provide a brief introduction to this topic. We also refer the reader to \cite{denoyelleTheoreticalNumericalAnalysis2018,catalaRelaxationsSemidefiniesPositives2020,lavilleOffTheGridVariationalSparse2021} and to the lecture notes \cite{poonIntroductionSparseSpikes2019}.

\paragraph{Problem formulation.} The sparse spikes problem consists in recovering a discrete measure~${\mu=\sum_i a_i\delta_{x_i}}$ from the knowledge of $y=\Phi\mu+w$, where $\Phi\colon\mathcal{M}(\mathcal{X})\to\CC^m$ is linear and $w\in\CC^m$ is an additive noise. This task naturally appears in a wide range of applications. Let us mention the localization of point light sources in biological and astronomical imaging \cite{denoyelleSlidingFrankWolfe2019}, the estimation of the parameters of mixture models in statistics~\cite{kerivenSketchingLargescaleLearning2018,castroSuperMixSparseRegularization2021}, and the training of two-layer neural networks in machine learning \cite{bachBreakingCurseDimensionality2017}. The most popular approaches for solving it are either of variational or of parametric type. In this work, we focus on the former. The latter is however briefly discussed below.

\paragraph{Parametric approaches.} Broadly speaking, parametric approaches consist in encoding the locations of the atoms in the roots of some polynomial. The most popular are Prony's method, Estimation of Signal Parameters via Rotational Invariance Techniques (ESPRIT), and the MUSIC algorithm. We refer the reader to \cite[Section 0.2]{catalaRelaxationsSemidefiniesPositives2020} for a detailed presentation and a comparison with variational approaches.

\paragraph{The Beurling LASSO.} In this work, we focus on a variational approach. It consists in looking for an estimate of $\mu$ among the solutions of a convex optimization problem in the space of measures, called the \emph{Beurling LASSO} or \emph{BLASSO}. It was introduced in \cite{decastroExactReconstructionUsing2012,brediesInverseProblemsSpaces2013}, and writes
\begin{equation}
	\underset{\nu\in\mathcal{M}(\mathcal{X})}{\mathrm{min}}~\frac{1}{2}\|\Phi\nu-y\|_2^2+\lambda |\nu|(\mathcal{X})\,,
	\label{blasso}
\end{equation}
where $\lambda$ is a regularizer parameter that should be chosen according to an estimate of the noise level $\|w\|_2$. This problem can be seen as an infinite-dimensional version of the celebrated LASSO, which is a popular method for the recovery of sparse finite-dimensional signals.

\paragraph{Stable recovery.} Under suitable assumptions, the solutions of \cref{blasso} can be guaranteed to be close to~$\mu$. In~\cite{brediesInverseProblemsSpaces2013,candesSuperResolutionNoisyData2013,fernandez-grandaSupportDetectionSuperresolution2013,azaisSpikeDetectionInaccurate2015}, some convergence guarantees were proved in the low noise regime. In~\cite{duvalExactSupportRecovery2015,denoyelleSupportRecoverySparse2017,poonSupportLocalizationFisher2019}, stronger exact support recovery results were derived. Outside the low noise regime, guarantees were also obtained in~\cite{tangMinimaxLineSpectral2015,boyerAdaptingUnknownNoise2017,butuceaOffthegridLearningSparse2022} under a Gaussian noise assumption.

\paragraph{Numerical methods.} The algorithm we focus on for solving \cref{blasso}, introduced in \cite{denoyelleSlidingFrankWolfe2019} (see also the earlier works \cite{brediesInverseProblemsSpaces2013,boydAlternatingDescentConditional2017}), is based on the Frank-Wolfe or conditional gradient algorithm. It is known as the \emph{sliding Frank-Wolfe algorithm}. It consists in iteratively adding atoms to the support of the zero measure based on the residual observations, and comes with strong convergence guarantees. Let us mention that there exist other approaches for solving \cref{blasso}. These include conic particle gradient descent \cite{chizatSparseOptimizationMeasures2022}, projected gradient descent~\cite{benardEstimationOffthegridSparse2023}, proximal methods \cite{valkonenProximalMethodsPoint2023}, exchange algorithms \cite{flinthLinearConvergenceRates2021}, as well as semidefinite relaxations \cite{tangCompressedSensingGrid2013,candesMathematicalTheorySuperresolution2014,decastroExactSolutionsSuper2017,catalaLowRankApproachOfftheGrid2019}.

\section{Overview of the proposed approach}\label{sec:overview}
The inverse problem we wish to solve aims to recover $a\in(\mathbb{C}^*)^s$ and $x\in\mathcal{X}^s$ from $y=\Phi^f_{x} a+w^{\rm noise}$, where~$w^{\rm noise}\in\mathbb{C}^m$ is an additive noise. In this section, we describe our two-step ``linearize and locally optimize'' approach to this problem. We first focus on the first step, hereby called the \emph{linear step}, and then turn to the second step, hereby called the \emph{nonlinear step}.

\paragraph{Linear step.} The key idea of our approach is to decompose $\Phi^f_{x} a$ by writing $\Phi^f_{x} a=\Phi^b_{x} a+w^{\rm lin}$, where~$w^{\rm lin} $ is the \emph{linearization error}. As a result, we can recast the recovery of $a$ and $x$ from $y$ as the recovery of the discrete measure $\mu=\sum_{i=1}^s a_{i}\delta_{x_{i}}$ from the noisy linear measurements $\Phi^b\mu+w$, where~$w=w^{\rm lin}+w^{\rm noise}$:
\begin{equation*}
y=\Phi^f_{x} a+w^{\rm noise} =\Phi^b_{x} a+w^{\rm lin} +w^{\rm noise} =\Phi^b\mu+w^{\rm lin} +w^{\rm noise}\,.
\end{equation*}
This is a particular instance of the sparse spikes problem. We hence propose to estimate $\mu$ by solving the BLASSO program \cref{blasso} with linear forward map $\Phi=\Phi^b$:
\begin{equation}
	\underset{\nu\in\mathcal{M}(\mathcal{X})}{\mathrm{min}}~\frac{1}{2}\|\Phi^b\nu-y\|_2^2+\lambda^b |\nu|(\mathcal{X})
	\label{blasso2}
\end{equation}
for some regularization parameter $\lambda^b>0$.

\paragraph{Nonlinear step.} Approximating $\mu$ by solving $\cref{blasso2}$ yields reconstructions whose quality deteriorates as the norm of the linearization error $\|w^{\rm lin}\|_2$ grows. We hence propose to improve the estimate obtained in the linear step by performing a local minimization step involving the nonlinear forward operator $\Phi^f$. More precisely, we wish to locally optimize the non-convex functional $J^f$ defined by
\begin{equation}
\begin{aligned}
	J^f \colon \mathbb{C}^s\times \mathcal{X}^{s} &  \to \RR \\
	(\tilde{a},\tilde{x}) & \mapsto \frac{1}{2}\|\Phi^f_{\tilde{x}} \tilde{a}-y\|_2^2+\lambda^f\|\tilde{a}\|_1\,,
\end{aligned}
\label{blasso_obj_nonlin}
\end{equation}
by using the output of the linear step as initialization. Provided the linearization error is sufficiently small, we can hope that the linear estimate lies in a basin of attraction of the true intensities and locations.

\section{Recovery guarantees}\label{sec:rec}
In this section, we first study the error $w^{\rm lin}=\Phi^f_{x} a-\Phi^b_{x} a$ committed by linearizing the forward operator, i.e.\ the difference between the far field patterns associated to the Foldy model and to the Born approximation. We show that the error can be controlled provided the scatterers are sufficiently separated and their intensities are small. Then, we leverage existing results on the sparse spikes problem, and show that, in this regime, solving~\cref{blasso} allows us to stably recover $\mu$.

\subsection{Bounding the linearization error}
\label{sec_bound_lin_err}
In this subsection, we study the linearization error $\|u^{\infty}-u^{\infty,b}\|_{\infty}$, see \eqref{far_field} and \eqref{far_field_born}. Classical error bounds are derived in e.g.\ \cite{nattererErrorBoundBorn2004}, which, however, does not treat the case of point-like scatterers. In short, we show that the linearization error is small as soon as~$\kappa^2\phi(\Delta)\|a\|_1$ is small, where $\phi$ is given in \eqref{eq:phi} and 
\begin{equation}\label{eq:Deltadef}
\Delta = \min_{i\neq j} |x_i-x_j|\,,
\end{equation}
i.e.\ the scatterers are sufficiently separated and have small intensities.

We first investigate the special case of two scatterers.
\begin{proposition}
    Let $a\in(\mathbb{C}^*)^2$ and $x\in\mathcal{X}^2$. Define $\alpha=\kappa^2 |G(x_1,x_2)|\sqrt{|a_1||a_2|}$. If $\alpha<1$, then we have
    \begin{equation}
	\|u^{\infty}-u^{\infty,b}\|_{\infty}\leq \frac{\kappa^2}{4\pi} \frac{2\alpha}{1-\alpha^2}\Big(\alpha\frac{|a_1|+|a_2|}{2}+\sqrt{|a_1| |a_2|}\Big)\,.
	\label{two_bound}
\end{equation}
\label{prop_two_scat}
\end{proposition}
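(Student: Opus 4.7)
The plan is to solve the $2\times 2$ Foldy--Lax system \eqref{eq:foldycompact} in closed form and then bound the resulting expressions by the triangle inequality. The starting observation, comparing \eqref{far_field} and \eqref{far_field_born}, is that
\begin{equation*}
u^{\infty}(\hat{x},\theta) - u^{\infty,b}(\hat{x},\theta) = \frac{\kappa^2}{4\pi}\sum_{i=1}^2 a_i\bigl(u_i - u^{\mathrm{in}}(x_i)\bigr)\,e^{-i\kappa\hat{x}\cdot x_i},
\end{equation*}
and since $|e^{-i\kappa\hat{x}\cdot x_i}|=1$, it suffices to bound $|a_1|\,|u_1 - u^{\mathrm{in}}(x_1)| + |a_2|\,|u_2 - u^{\mathrm{in}}(x_2)|$ by the right-hand side of \eqref{two_bound} multiplied by $4\pi/\kappa^2$.

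Next I would write the Foldy system explicitly. Setting $g = G(x_1,x_2) = G(x_2,x_1)$, the determinant of the $2\times 2$ matrix $Id - \kappa^2 T_{a,x}$ equals $D = 1 - \kappa^4 g^2 a_1 a_2$, whose modulus is at least $1 - \alpha^2 > 0$ by hypothesis. Cramer's rule then yields
\begin{equation*}
u_1 = \frac{u^{\mathrm{in}}(x_1) + \kappa^2 g\, a_2\, u^{\mathrm{in}}(x_2)}{D}, \qquad u_2 = \frac{u^{\mathrm{in}}(x_2) + \kappa^2 g\, a_1\, u^{\mathrm{in}}(x_1)}{D},
\end{equation*}
and subtracting $u^{\mathrm{in}}(x_i)$ and using $1 - D = \kappa^4 g^2 a_1 a_2$ gives the clean closed form, for $\{i,j\}=\{1,2\}$,
\begin{equation*}
u_i - u^{\mathrm{in}}(x_i) = \frac{\kappa^2 g\, a_j\,\bigl(u^{\mathrm{in}}(x_j) + \kappa^2 g\, a_i\, u^{\mathrm{in}}(x_i)\bigr)}{D}.
\end{equation*}

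Using $|u^{\mathrm{in}}(x_i)|=1$, $|D|\geq 1-\alpha^2$, and the triangle inequality, I would then get
\begin{equation*}
|a_i|\,|u_i - u^{\mathrm{in}}(x_i)| \leq \frac{\kappa^2|g|\,|a_i||a_j| + \kappa^4|g|^2\,|a_i|^2|a_j|}{1 - \alpha^2}.
\end{equation*}
Summing over $i\in\{1,2\}$ and rewriting via $\kappa^2|g|\,|a_1||a_2| = \alpha\sqrt{|a_1||a_2|}$ and $\kappa^4|g|^2|a_1||a_2| = \alpha^2$ produces exactly $\tfrac{2\alpha}{1-\alpha^2}\bigl(\sqrt{|a_1||a_2|} + \alpha\,\tfrac{|a_1|+|a_2|}{2}\bigr)$, which, after multiplication by $\kappa^2/(4\pi)$, is the right-hand side of \eqref{two_bound}.

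There is no serious obstacle here: the two-scatterer case is small enough that everything reduces to closed-form bookkeeping once Cramer's rule is applied. The only mild care needed is to group the terms so as to keep the geometric-mean factor $\sqrt{|a_1||a_2|}$ in the leading ($O(\alpha)$) contribution rather than weakening it to $(|a_1|+|a_2|)/2$, since the former is the tighter bound and matches the stated form of \eqref{two_bound}.
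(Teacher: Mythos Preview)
Your proof is correct and follows essentially the same approach as the paper: both invert the $2\times 2$ Foldy matrix explicitly (the paper via the adjugate, you via Cramer's rule, which amount to the same thing), then bound the resulting expression using $|D|\geq 1-\alpha^2$ and the triangle inequality. The only cosmetic difference is that the paper introduces the complex quantity $\beta=\kappa^2 G(x_1,x_2)\sqrt{|a_1||a_2|}\,e^{i(\varphi_1+\varphi_2)/2}$ with $|\beta|=\alpha$ and first derives the slightly sharper bound with $|1-\beta^2|$ in the denominator before weakening to $1-\alpha^2$, whereas you go directly to the final form; your organization via the identity $u_i-u^{\mathrm{in}}(x_i)=\kappa^2 g\,a_j(u^{\mathrm{in}}(x_j)+\kappa^2 g\,a_i u^{\mathrm{in}}(x_i))/D$ is arguably cleaner.
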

\begin{proof}
   For $s=2$ scatterers, the Foldy matrix $Id-\kappa^2 T_{a,x}$ can be explicitly inverted. Indeed, we have that~$\mathrm{det}(Id-\kappa^2T_{a,x})=1-\beta^2$ with 
\begin{equation*}
	\beta\eqdef \kappa^2G(x_1,x_2)\sqrt{|a_1| |a_2|}e^{i(\varphi_1+\varphi_2)/2}~\mathrm{and}~a_i = |a_i|e^{i\varphi_i}\,.
\end{equation*}
Therefore, provided $\beta^2\neq 1$, we obtain that $Id-\kappa^2 T_{a,x}$ is invertible and
\begin{equation*}
	(Id-\kappa^2 T_{a,x})^{-1}=\frac{1}{1-\beta^2}\begin{pmatrix}1&\kappa^2G(x_1,x_2)a_2\\\kappa^2G(x_1,x_2)a_1&1\end{pmatrix}.
\end{equation*}
In view of \eqref{eq:uin}, \eqref{far_field}  and \eqref{eq:foldycompact}, this yields
\begin{equation*}
	u^{\infty}(\hat{x},\theta)=\frac{\kappa^2}{4\pi}\left(a_1 e^{-i\kappa \hat{x}\cdot x_1}\frac{e^{i\kappa\theta\cdot x_1}+\kappa^2G(x_1,x_2)a_2 e^{i\kappa\theta\cdot x_2}}{1-\beta^2}+a_2 e^{-i\kappa \hat{x}\cdot x_2}\frac{e^{i\kappa\theta\cdot x_2}+\kappa^2G(x_1,x_2)a_1 e^{i\kappa\theta\cdot x_1}}{1-\beta^2}\right),
\end{equation*}
which, by \eqref{far_field_born}, finally allows us to get the following bound:
\begin{equation}
	\|u^{\infty}-u^{\infty,b}\|_{\infty}\leq \frac{\kappa^2}{4\pi} \frac{2|\beta|}{|1-\beta^2|}\Big(|\beta|\frac{|a_1|+|a_2|}{2}+\sqrt{|a_1| |a_2|}\Big)\,.
	\label{two_bound_prim}
\end{equation}
When $\alpha=|\beta|<1$, we finally obtain \eqref{two_bound}. 
\end{proof}
Next, we turn to the general case, in which we provide two different bounds. The second one is finer and explicitly depends on the strength of the pairwise interactions between the scatterers.
\begin{proposition} Let $a\in(\mathbb{C}^*)^s$ and $x\in\mathcal{X}^s$. Define $\Delta\eqdef \mathrm{min}_{i\neq j}\,|x_i-x_j|$ and~${\alpha\eqdef \kappa^2\phi(\Delta)\,\mathrm{max}_{i}\,\|a_{-i}\|_1}$. If~$\alpha<1$, then we have
\begin{equation}
	\|u^{\infty}-u^{\infty,b}\|_{\infty}\leq \frac{\kappa^2}{4\pi} \|a\|_1\frac{\alpha}{1-\alpha}
	\label{gen_bound_1}
\end{equation}
and
\begin{equation}
	\|u^{\infty}-u^{\infty,b}\|_{\infty}\leq \frac{\kappa^2}{4\pi}\bigg(\|a\|_1\frac{\alpha^2}{1-\alpha}+2\sum_{i<j} |a_i| |a_j| \kappa^2\phi(|x_i-x_j|)\bigg).
	\label{gen_bound_2}
\end{equation}
\label{prop_gen_scat}
\end{proposition}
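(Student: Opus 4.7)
My plan is to reduce everything to the vector $v \eqdef u - u^{\mathrm{in}}(x) \in \mathbb{C}^s$, where $u$ solves the Foldy system \eqref{eq:foldycompact}. From the definitions \eqref{far_field} and \eqref{far_field_born}, the two far field patterns differ only through the replacement $u \mapsto u^{\mathrm{in}}(x)$, so a direct computation gives
\begin{equation*}
u^{\infty}(\hat x,\theta)-u^{\infty,b}(\hat x,\theta)=\frac{\kappa^2}{4\pi}\sum_{i=1}^s a_i v_i\, e^{-i\kappa\hat x\cdot x_i},
\end{equation*}
and taking the absolute value yields the pointwise estimate $|u^{\infty}(\hat x,\theta)-u^{\infty,b}(\hat x,\theta)|\le\tfrac{\kappa^2}{4\pi}\sum_i |a_i|\,|v_i|$. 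The whole task then becomes controlling $\sum_i |a_i|\,|v_i|$ in two different ways.

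To access $v$, I rewrite \eqref{eq:foldycompact} as $v=\kappa^2 T_{a,x}u = w+\kappa^2 T_{a,x}v$, where $w\eqdef\kappa^2 T_{a,x}u^{\mathrm{in}}(x)$ is the \emph{single scattering} term. Because $|u^{\mathrm{in}}|\equiv 1$, the definition of $T_{a,x}$ together with \eqref{eq:Gphi} and the monotonicity of $\phi$ give the row bound $\kappa^2\sum_{j\neq i}|G(x_i,x_j)a_j|\le\kappa^2\phi(\Delta)\|a_{-i}\|_1\le\alpha$, i.e.\ $\|\kappa^2 T_{a,x}\|_{\infty\to\infty}\le\alpha<1$. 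Iterating $v=w+\kappa^2 T_{a,x}v$ yields the Neumann representation $v=\sum_{k\ge 1}(\kappa^2 T_{a,x})^k u^{\mathrm{in}}(x)$, from which
\begin{equation*}
\|v\|_\infty\le \sum_{k\ge 1}\alpha^k=\frac{\alpha}{1-\alpha}.
\end{equation*}
Plugging this into $\sum_i|a_i|\,|v_i|\le \|a\|_1\|v\|_\infty$ gives \eqref{gen_bound_1}.

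For the finer bound \eqref{gen_bound_2}, the idea is to split $v_i=w_i+(v_i-w_i)$ and treat the two contributions asymmetrically. The single scattering part $w$ is controlled by direct pairwise interactions: $|w_i|\le\kappa^2\sum_{j\neq i}\phi(|x_i-x_j|)|a_j|$, so that
\begin{equation*}
\sum_i |a_i|\,|w_i|\le\kappa^2\sum_{i\neq j}|a_i||a_j|\phi(|x_i-x_j|)=2\kappa^2\sum_{i<j}|a_i||a_j|\phi(|x_i-x_j|).
\end{equation*}
The multiple scattering residual $v-w=\kappa^2 T_{a,x}v$ is then estimated by the crude but cheap bound $|(v-w)_i|\le\kappa^2\phi(\Delta)\|a_{-i}\|_1\|v\|_\infty\le\alpha\cdot\tfrac{\alpha}{1-\alpha}=\tfrac{\alpha^2}{1-\alpha}$, and summing against $|a_i|$ contributes the term $\|a\|_1\tfrac{\alpha^2}{1-\alpha}$. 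Combining the two pieces and multiplying by $\kappa^2/(4\pi)$ yields \eqref{gen_bound_2}.

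The only delicate step is recognizing that one should \emph{not} apply the uniform bound $\phi(\Delta)$ to the single scattering part $w$ (which would forfeit the explicit pairwise dependence and reproduce only \eqref{gen_bound_1}), but only to the higher-order residual $v-w$, where the loss is of order $\alpha^2$ rather than $\alpha$. All remaining estimates are routine applications of the Neumann series and the triangle inequality.
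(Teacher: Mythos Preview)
Your proof is correct and follows essentially the same route as the paper: your vector $v=u-u^{\mathrm{in}}(x)$ is exactly $((Id-\kappa^2 T_{a,x})^{-1}-Id)\,u^{\mathrm{in}}(x)$, your bound $\|\kappa^2 T_{a,x}\|_{\infty\to\infty}\le\alpha$ matches the paper's operator-norm estimate, and your splitting $v=w+(v-w)$ is precisely the paper's decomposition $\sum_{n\ge 1}(\kappa^2 T_{a,x})^n u^{\mathrm{in}}(x)=\kappa^2 T_{a,x}u^{\mathrm{in}}(x)+\sum_{n\ge 2}(\kappa^2 T_{a,x})^n u^{\mathrm{in}}(x)$, treated in the same way.
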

\begin{proof}
The infinity operator norm of $T_{a,x}$, defined by
\begin{equation*}
	\|T_{a,x}\|_{\infty}\eqdef\mathrm{sup}~\{\|T_{a,x}u\|_{\infty}:\|u\|_{\infty}=1\}\,,
\end{equation*}
can be bounded as follows (see \eqref{eq:Gphi} and \eqref{eq:Deltadef}):
\begin{equation*}
\|T_{a,x}\|_{\infty}=\underset{1\leq i\leq s}{\mathrm{max}}~\sum\limits_{\substack{j=1\\j\neq i}}^s |G(x_i,x_j)||a_j|\leq \phi(\Delta)\,\underset{1\leq i\leq s}{\mathrm{max}}~\|a_{-i}\|_1\leq \phi(\Delta)\|a\|_1\,.
\end{equation*}
Provided $\kappa^2\|T_{a,x}\|_{\infty}<1$, by \eqref{eq:uin}, \eqref{far_field}, \eqref{far_field_born},  and \eqref{eq:foldycompact} this yields
\begin{equation*}
	\begin{aligned}
		|u^{\infty}(\hat{x},\theta)-u^{\infty,b}(\hat{x},\theta)|&=\frac{\kappa^2}{4\pi}\left|\sum\limits_{i=1}^s a_i e^{-i\kappa \hat{x}\cdot x_i}\left[((Id-\kappa^2 T_{a,x})^{-1}-Id)u^{\mathrm{in}}(x)\right]_i\right|
\\ &\leq \frac{\kappa^2}{4\pi}\|a\|_1\|(Id-\kappa^2 T_{a,x})^{-1}-Id\|_{\infty}\\&\leq \frac{\kappa^2}{4\pi}\|a\|_1 \Bigg\|\sum_{n=1}^{+\infty}\kappa^{2n} T_{a,x}^n \Bigg\|_{\infty}\\ &\leq \frac{\kappa^2}{4\pi}\|a\|_1 \sum\limits_{n=1}^{+\infty}(\kappa^2 \|T_{a,x}\|_{\infty})^n\\&=\frac{\kappa^2}{4\pi}\|a\|_1\frac{\kappa^2\|T_{a,x}\|_{\infty}}{1-\kappa^2\|T_{a,x}\|_{\infty}}\,,
	\end{aligned}
\end{equation*}
where we have used that $\|u^{\mathrm{in}}(x)\|_\infty \le 1$. We therefore obtain \eqref{gen_bound_1} provided $\alpha<1$.

Now, we turn to the proof of \eqref{gen_bound_2}. When $s>2$, obtaining a simple expression of the powers of $T_{a,x}$ appears difficult. However, the contribution of first order interactions between pairs of scatterers can be treated separately from higher order interactions. This is done as follows: 
\begin{equation*}
	\begin{aligned}
		|u^{\infty}(\hat{x},\theta)-u^{\infty,b}(\hat{x},\theta)|&= \frac{\kappa^2}{4\pi}\left|\sum\limits_{i=1}^s a_i e^{-i\kappa \hat{x}\cdot x_i}\left[((Id-\kappa^2 T_{a,x})^{-1}-Id)u^{\mathrm{in}}(x)\right]_i\right|\\
		&=\frac{\kappa^2}{4\pi}\left|\sum\limits_{i=1}^s a_i e^{-i\kappa \hat{x}\cdot x_i}\left[\left(\kappa^2T_{a,x}+\sum\limits_{n=2}^{+\infty}\kappa^{2n}T_{a,x}^n\right)u^{\mathrm{in}}(x)\right]_i\right|\\
		&\leq \frac{\kappa^2}{4\pi}\left|\sum\limits_{i=1}^s a_i e^{-i\kappa \hat{x}\cdot x_i}\left[\kappa^2T_{a,x}u^{\mathrm{in}}(x)\right]_i\right|+\frac{\kappa^2}{4\pi}\left|\sum\limits_{i=1}^s a_i e^{-i\kappa \hat{x}\cdot x_i}\left[\left(\sum\limits_{n=2}^{+\infty}\kappa^{2n}T_{a,x}^n\right)u^{\mathrm{in}}(x)\right]_i\right|\,.
	\end{aligned}
\end{equation*}
We bound the first term as follows:
\begin{equation*}
	\left|\sum\limits_{i=1}^s a_i e^{-i\kappa \hat{x}\cdot x_i}\left[\kappa^2T_{a,x}u^{\mathrm{in}}(x)\right]_i\right|=\kappa^2\Bigg|\sum\limits_{i=1}^s\sum\limits_{\substack{j=1\\j\neq i}}^s a_i a_j G(x_i,x_j)e^{-i\kappa(\hat{x}\cdot x_i-\theta\cdot x_j)}\Bigg|\leq 2\sum\limits_{i<j}|a_i| |a_j| \kappa^2\phi(|x_i-x_j|)\,,
\end{equation*}
and treat the second term as above to obtain
\begin{equation*}
	\left|\sum\limits_{i=1}^s a_i e^{-i\kappa \hat{x}\cdot x_i}\left[\left(\sum\limits_{n=2}^{+\infty}\kappa^{2n}T_{a,x}^n\right)u^{\mathrm{in}}\right]_i\right|\leq \|a\|_1\frac{\alpha^2}{1-\alpha}\,.
\end{equation*}
This finally yields \eqref{gen_bound_2}.
\end{proof}
We notice that, in the case $s=2$ and $a_1=a_2$, bounding $|G(x_1,x_2)|$ by $\phi(\Delta)$ in \eqref{gen_bound_1} yields~\eqref{two_bound}. However,~\eqref{two_bound} is significantly better when the two amplitudes are different. The second bound \eqref{gen_bound_2}, which explicitly depends on the strength of the pairwise interactions between the scatterers, allows us to bridge this gap. It is finer than \Cref{gen_bound_1}, since $\frac{\alpha}{1-\alpha}=\frac{\alpha^2}{1-\alpha}+\alpha$ and 
\begin{equation*}
	\begin{aligned}
	2\sum\limits_{i<j} |a_i| |a_j| \kappa^2\phi(|x_i-x_j|)&=\sum\limits_{i=1}^s |a_i| \sum\limits_{\substack{j=1\\j\neq i}}^s |a_j| \kappa^2\phi(|x_i-x_j|)\\
	&\leq \kappa^2\phi(\Delta)\sum\limits_{i=1}^s |a_i|\sum\limits_{\substack{j=1\\j\neq i}}^s |a_j|\\&\leq \kappa^2\phi(\Delta)\|a\|_1\,\underset{i}{\mathrm{max}}~\|a_{-i}\|_1=\alpha\|a\|_1\,.
	\end{aligned}
\end{equation*}

\paragraph{Numerical experiments.} We end this section by studying how tight  our bounds are in practice. We are especially interested in knowing whether the true error has the same dependence on $\kappa^2\phi(\Delta)$. Since~\eqref{gen_bound_1} and~\eqref{gen_bound_2} reduce to \eqref{two_bound} when $s=2$ and $a_1=a_2$, we focus on the study of \eqref{two_bound}. We take $d=2$ and consider~$100$ uniformly sampled incident and observation directions. We compute the linearization error $\|\Phi^f_x a-\Phi^b_x a\|_2$ where~$a=(1,1)$ and $x=(x_1,x_2)$ with $|x_1-x_2|=\Delta$ for multiple values of $\Delta$ and~$\kappa\in\{0.1,1,10\}$. For each value of $\Delta$, we average the errors corresponding to $20$ configurations obtained as follows: we draw~$x_1$ uniformly in $[-1,1]^2$, then draw $e$ uniformly in $\mathbb{S}^1$ and define $x_2\eqdef x_1+\Delta e$. The results are presented in \Cref{fig:lin_err}. We choose not to display error bars for the empirical error since we observe its standard deviation to be very small. We see that \eqref{two_bound} is almost tight when $\alpha$ is sufficiently bigger than~$1$, but deteriorates as $\alpha$ approaches~$1$.
\begin{figure}
	\hspace{2.8cm}
	\includegraphics[height=5.5cm]{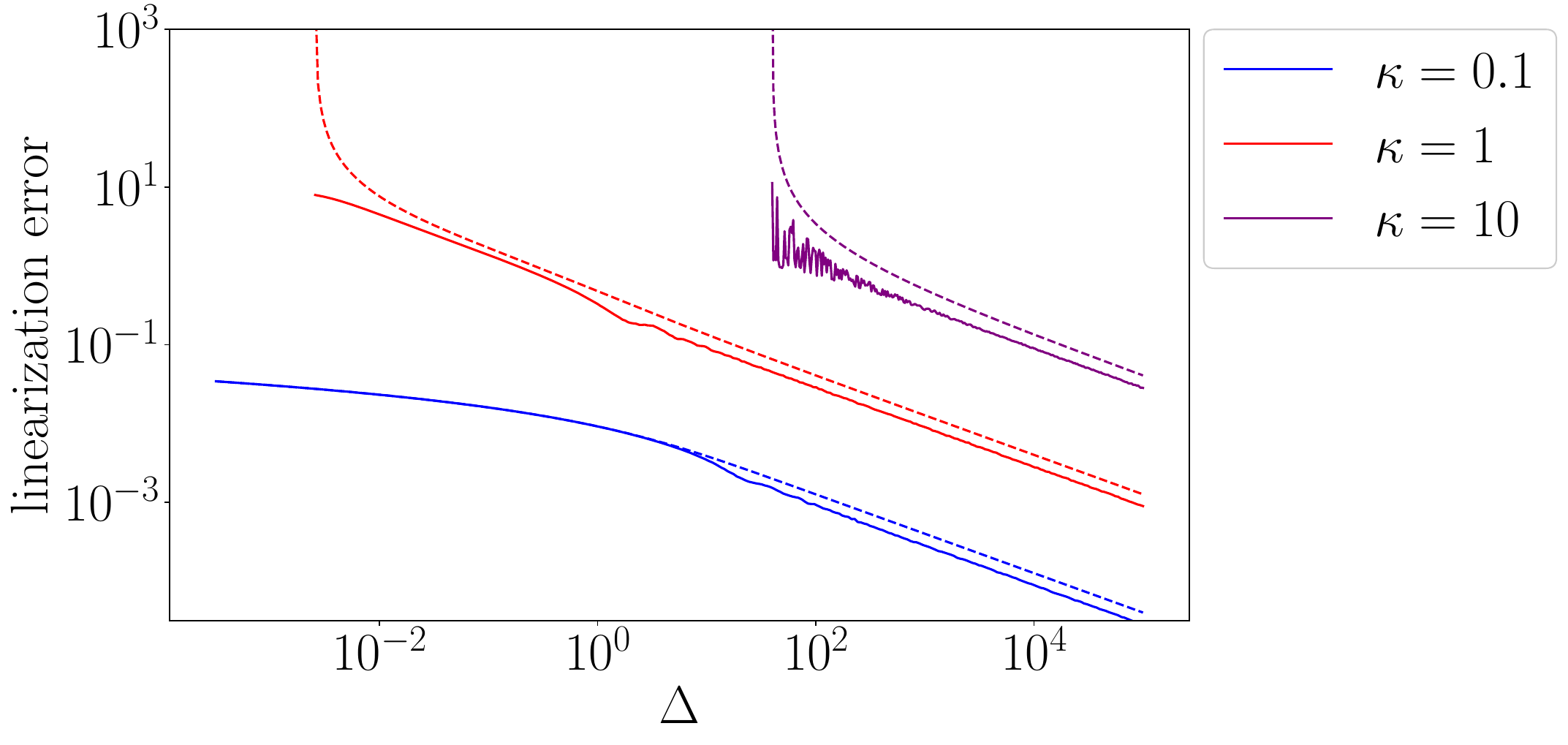}
	\caption{Dependance of the linearization error on $\kappa$ and $\Delta$. The dashed line corresponds to the bound \eqref{two_bound}, and the plain line to the empirical error.}
	\label{fig:lin_err}
\end{figure}
One could also wonder how \eqref{two_bound_prim} behaves outside the regime $|\beta|<1$. However, \Cref{fig:lin_err_bis} suggests that, in this case, $\|\Phi^f_x a-\Phi^b_x a\|_2$ is almost equal to $\|\Phi^f_x a\|_2+\|\Phi^b_x a\|_2$, so that even if \eqref{two_bound_prim} remains valid, this regime makes the linearization approach unsuitable for solving the inverse problem.
\begin{figure}
	\hspace{3cm}
	\includegraphics[height=5.5cm]{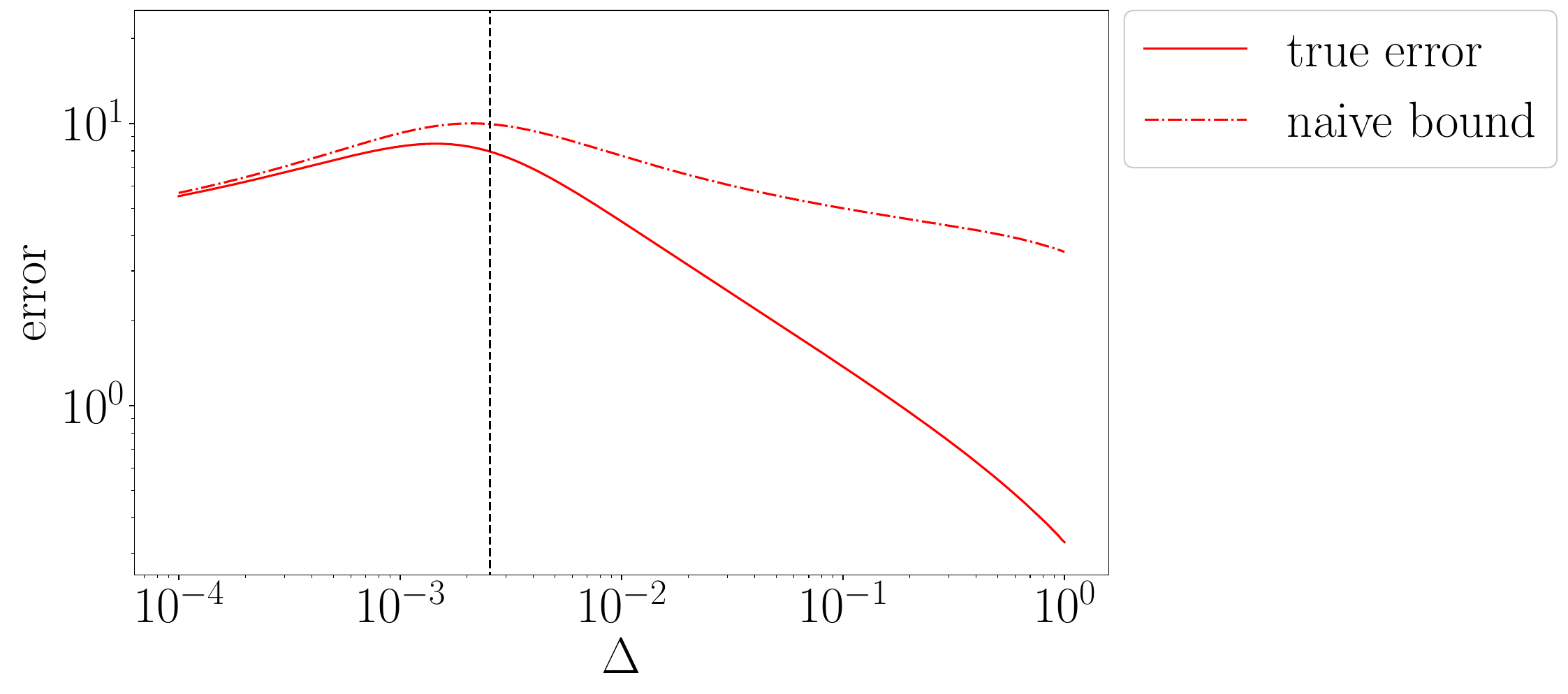}
	\caption{True linearization error $\|\Phi^f_x a-\Phi^b_x a\|_2$ and naive upper bound $\|\Phi^f_x a\|_2+\|\Phi^b_x a\|_2$ for $\kappa=1$. On the left of the vertical black line we have $|\beta|=\alpha>1$, and on the right $|\beta|=\alpha<1$.}
	\label{fig:lin_err_bis}
\end{figure}

\subsection{Stable reconstruction}
\label{sec_th_rec}
 We now provide guarantees that solutions of \cref{blasso} are close to $\mu$, provided a minimal separation condition holds and the sum $w$ of the linearization error and the noise is small. Our results are direct consequences of existing results on the sparse spikes problem (specifically, \cite[Theorem 3]{poonGeometryOfftheGridCompressed2023} and \cite[Theorem~1]{poonSupportLocalizationFisher2019}). We focus on the ``off-the-grid compressed sensing'' line of works \cite{tangCompressedSensingGrid2013,poonGeometryOfftheGridCompressed2023}, which exhibit conditions under which a discrete measure can be stably recovered from randomly sampled measurements. They consider forward operators of the form\footnote{Recall that $\mathcal{X}$ is a connected bounded open subset of $\RR^d$.}
\begin{equation}
	\begin{aligned}
		\mathcal{M}(\mathcal{X}) &  \to \mathbb{C}^m \\
		\mu & \mapsto \frac{1}{\sqrt{m}}\left[\int_{\mathcal{X}}\varphi_{\omega_k}\,d\mu\right]_{k=1}^m
	\end{aligned}
    \label{meas_op}
	\end{equation}
where $\omega_1,...,\omega_m$ are independent and identically distributed according to a certain probability distribution~$\Lambda$ and $\varphi_\omega$ are continuous functions on $\mathcal{X}$.

Recall that the far field pattern in the Born approximation \eqref{far_field_born} gives access to the Fourier transform of~$\mu=\sum_{i=1}^s a_i\delta_{x_i}$ in $\kappa\mathbb{S}^{d-1}-\kappa\mathbb{S}^{d-1}=\overline{B(0,2\kappa)}$, with $B(0,2\kappa)$ the open ball of radius $2\kappa$ centered at $0$. Thus, we take $\varphi_{\omega}\eqdef e^{-i\langle \omega,\cdot\rangle}$ and we let $\Lambda$ be the uniform law on~$B(0,2\kappa)$.

\paragraph{Sampling of the incident and observation directions.} Now, we explain how to sample the incident and observation directions in order to obtain uniform Fourier samples in $B(0,2\kappa)$. For simplicity, we consider the case $d=2$. From \cref{far_field_born}, we have that, for an incident direction $\theta\in\mathbb{S}^1$ and an observation direction $\hat{x}\in\mathbb{S}^1$, the far field pattern in the Born approximation is proportional to the Fourier transform of the unknown measure at the frequency~$\kappa(\hat{x}-\theta)$. We hence want to find $\pi_{(\hat{x},\theta)}$ such that, if~$(\hat{X},\Theta)\sim \pi_{(\hat{x},\theta)}$, then $\kappa(\hat{X}-\Theta)\sim \pi_{\omega}$ with $\pi_{\omega}=\mathcal{U}(B(0,2\kappa))$. In order to do this, let us identify $\RR^2$ with $\mathbb{C}$ and define
\begin{equation*}
	\begin{aligned}
		f \colon B(0,2\kappa) &  \to \mathbb{S}^1 \\
		\omega=re^{i\theta} & \mapsto e^{i\left(\theta+\mathrm{arcos}\left(\frac{r}{2\kappa}\right)\right)}\,,
	\end{aligned}
	~~~~~~~~
	\begin{aligned}
		g \colon B(0,2\kappa) &  \to \mathbb{S}^1 \\
		\omega=re^{i\theta} & \mapsto -e^{i\left(\theta-\mathrm{arcos}\left(\frac{r}{2\kappa}\right)\right)}\,.
	\end{aligned}
\end{equation*}
A simple computation shows that, for any $\omega\in B(0,2\kappa)$, we have $\omega=\kappa(f(\omega)-g(\omega))$. As a consequence, we obtain that taking $\pi_{\hat{x},\theta}=(f,g)_{\#}\pi_{\omega}$ yields $\kappa(\hat{X}-\Theta)\sim\pi_{\omega}$, where $(f,g)_{\#}\pi_{\omega}$ denotes the pushforward of $\pi_{\omega}$ by~$(f,g)$. In other words, one can sample $\omega_1,...,\omega_m$ uniformly in $B(0,2\kappa)$ and take~${\hat{x}_k=f(\omega_k)}$ and~$\theta_k=g(\omega_k)$ for all $k\in\{1,...,m\}$.

\paragraph{Stable recovery.} We first focus on the application of \cite[Theorem 3]{poonGeometryOfftheGridCompressed2023}. This result shows the stability of the reconstruction in terms of a partial optimal transport distance $\mathcal{T}_{\delta}$, which allows us to compare measures with different masses. It is defined, for any pair of positive Radon measures $\nu_1,\nu_2\in\mathcal{M}_+(\mathcal{X})$, by
\begin{equation*}
	\mathcal{T}_{\delta}(\nu_1,\nu_2)^2\eqdef \underset{\tilde{\nu}_1,\tilde{\nu}_2}{\mathrm{inf}}~\delta W_2^2(\tilde{\nu}_1,\tilde{\nu}_2)+|\nu_1-\tilde{\nu}_1|(\mathcal{X})+|\nu_2-\tilde{\nu}_2|(\mathcal{X})\,,
\end{equation*}
where $\delta\eqdef 2\kappa/\sqrt{d+2}\sim \kappa$, $W_2$ denotes the $2$-Wasserstein distance and the infimum is taken over all measures~$\tilde{\nu}_1,\tilde{\nu}_2$ satisfying ${|\tilde{\nu}_1|(\mathcal{X})=|\tilde{\nu}_2|(\mathcal{X})}$. We refer the reader to e.g.\ \cite{santambrogioOptimalTransportApplied2015,peyreComputationalOptimalTransport2019} for more details on optimal transport and its extensions.

\begin{theorem} Let $a\in(\mathbb{C}^*)^s$ and $x\in\mathcal{X}^s$. Assume that $(\hat{x}_k,\theta_k)_{1\leq k\leq m}$ are sampled so that $(\kappa(\hat{x}_k-\theta_k))_{1\leq k\leq m}$ are i.i.d. samples from $\mathcal{U}(B(0,2\kappa))$, and that~${y=(u^{\infty,b}(\hat{x}_k,\theta_k)+w_k)_{1\leq k\leq m}}$ with $w\in \mathbb{C}^m$. Then there exists~${r\sim 1/\kappa}$ such that, assuming
	$$m\gtrsim s\bigg(\mathrm{log}(s)\,\mathrm{log}\bigg(\frac{s}{\rho}\bigg)+\mathrm{log}\bigg(\frac{s\kappa}{\rho}\bigg)\bigg)$$
 for some $\rho\in (0,1)$ and
	$$\mathrm{min}_{i\neq j}|x_{i}-x_{j}|\gtrsim\frac{s^{2/(d+1)}}{\kappa}\,,$$
with probability at least $1-\rho$, every solution $\hat{\mu}$ of \cref{blasso2} with $\lambda^b\sim \|w\|_2/\sqrt{s}$ is~$\sqrt{s}\|w\|_2$-close to $\sum_{i=1}^s a_i\delta_{x_i}$, i.e.
\begin{equation}
	\mathcal{T}_{\delta}^2\left(\sum\limits_{i=1}^s \hat{A}_i \delta_{x_{i}},|\hat{\mu}|\right)\lesssim\sqrt{s}\|w\|_2~~\text{and}~~\underset{1\leq i\leq s}{\mathrm{max}}~|\hat{a}_i-a_i|\lesssim\sqrt{s}\,\|w\|_2\,,
	\label{recovery_guarantee}
\end{equation}
where $\hat{A}_i=|\hat{\mu}|(B(x_i,r))$ and $\hat{a}_i=\hat{\mu}(B(x_i,r))$.
\label{stable_recovery}
\end{theorem}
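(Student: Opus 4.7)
The plan is to reduce the statement to a direct application of \cite[Theorem 3]{poonGeometryOfftheGridCompressed2023}, whose abstract setting is precisely the Fourier-type sampling model \eqref{meas_op} with $\varphi_\omega = e^{-i\langle\omega,\cdot\rangle}$ and $\omega_1,\dots,\omega_m$ i.i.d.\ according to some distribution $\Lambda$. Here we take $\Lambda = \mathcal{U}(B(0,2\kappa))$, which by the change of variables $\omega = \kappa(\hat x-\theta)$ recorded in the excerpt matches our observation model under the assumption on $(\hat x_k,\theta_k)$. The task is therefore to verify that the structural hypotheses of Poon's theorem hold for this $\Lambda$, with constants that track the correct $\kappa$-dependence.

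First I would introduce the covariance kernel
\[
K(x,y) \;=\; \mathbb{E}_{\omega\sim\Lambda}\bigl[\varphi_\omega(x)\overline{\varphi_\omega(y)}\bigr] \;=\; \frac{1}{|B(0,2\kappa)|}\int_{B(0,2\kappa)} e^{i\omega\cdot(y-x)}\,d\omega,
\]
which is radial in $y-x$ and is (up to normalization) a Bessel-type function. Because $\Lambda$ is isotropic, $K$ is translation-invariant and $\nabla^2_y K(x,x)|_{y=x} = -\delta^2 \mathrm{Id}$ with $\delta = 2\kappa/\sqrt{d+2}$, which is exactly the parameter appearing in the partial transport distance $\mathcal{T}_\delta$. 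This computation justifies the scale at which distances are measured in \eqref{recovery_guarantee}.

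Next, I would verify the two key assumptions of \cite[Theorem 3]{poonGeometryOfftheGridCompressed2023}: (i) a non-degeneracy / minimal separation hypothesis expressed through $K$ and its derivatives, yielding the existence of a good pre-certificate in $B(x_i, r)$ with $r \sim 1/\kappa$; and (ii) an $s$-dependent complexity estimate for the class $\{\varphi_\omega : \omega \in \mathrm{supp}(\Lambda)\}$ controlling the number $m$ of random frequencies. For (i), the sharp estimates on derivatives of the Bessel kernel (which are essentially $\kappa$-dilations of a fixed universal kernel because the law is a rescaling of $\mathcal{U}(B(0,1))$) give an admissible separation threshold scaling as $s^{2/(d+1)}/\kappa$, matching the hypothesis on $\min_{i\neq j}|x_i-x_j|$. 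For (ii), a standard covering-number argument for the bounded subset $B(0,2\kappa)\subset\RR^d$ produces the $m \gtrsim s(\log s\,\log(s/\rho) + \log(s\kappa/\rho))$ sample-complexity bound, the extra $\log\kappa$ factor coming from the diameter of the frequency support.

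With these hypotheses in place, \cite[Theorem 3]{poonGeometryOfftheGridCompressed2023} directly yields the $\mathcal{T}_\delta$-stability bound in \eqref{recovery_guarantee}, once the regularization parameter is set to $\lambda^b \sim \|w\|_2/\sqrt s$ and the noise $w$ is identified with the sum of the linearization error and the physical noise. The pointwise bound $|\hat a_i - a_i| \lesssim \sqrt{s}\,\|w\|_2$ follows by the same argument, localizing mass in $B(x_i,r)$ through the same pre-certificate. I expect the main obstacle to be the quantitative construction of the pre-certificate in step (i): verifying non-degeneracy for the specific Bessel kernel associated with $\mathcal{U}(B(0,2\kappa))$ requires sharper estimates than those available in prior work (which typically treated Gaussian or box-Fourier sampling), and controlling the exponent $s^{2/(d+1)}$ hinges on a careful quantitative use of the radial structure of $K$.
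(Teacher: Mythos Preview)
Your proposal is correct and follows essentially the same route as the paper: reduce to \cite[Theorem~3]{poonGeometryOfftheGridCompressed2023}, identify the covariance kernel as the Bessel-type function $K(x,x')=\rho(2\kappa|x-x'|)$ with $\rho(s)=(2/s)^{d/2}\Gamma(d/2+1)J_{d/2}(s)$, compute the Fisher metric (giving $\delta=2\kappa/\sqrt{d+2}$), and then verify the structural hypotheses of that theorem via estimates on $\rho$ and its derivatives. The paper carries this out by checking an explicit list of five conditions (uniform bounds on the covariant derivatives $B_{ij}$, a near-region curvature lower bound, a far-region decay bound, a kernel-width estimate yielding the $s^{2/(d+1)}$ separation, and uniform bounds on the feature norms $L_j$); your sketch groups these into ``non-degeneracy'' and ``complexity'' but the substance is the same. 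One small inaccuracy: the sample-complexity verification does not go through a covering-number argument on $B(0,2\kappa)$ but through the trivial bound $\|L_j\|_\infty\le(-\rho''(0))^{-j/2}$, which holds because $\sigma\|\omega\|\le 1$ almost surely; the $\log\kappa$ factor enters through the Fisher-metric diameter of $\mathcal{X}$ inside Poon's theorem, not through anything you verify directly.
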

 Let us stress that \eqref{recovery_guarantee} tells that the support of $\hat{\mu}$ is essentially located on small neighborhoods of the true locations $\{x_i\}_{1\leq i\leq s}$, and that, in each of these neighborhoods, the average value $\hat{a}_i$ of $\hat{\mu}$ is close to the true amplitude $a_i$. The number of scalar measurements $m$ is, up to logarithmic factors, linearly  proportional to the number of spikes $s$, which is the best one can hope for.

\paragraph{Exact support recovery.} In their earlier work \cite{poonSupportLocalizationFisher2019}, the authors of \cite{poonGeometryOfftheGridCompressed2023} showed stronger \emph{exact support recovery} results under more restrictive assumptions. Their application to our problem is the object of \Cref{prop_exact_supp_rec}. Contrary to \Cref{stable_recovery}, this result does not hold for arbitrary $w$. It holds only if $\|w\|_2$ is sufficiently small, in terms of the sparsity $s$ and the intensities $(a_i)_{1\leq i\leq s}$, and with a larger number of measurements $m$.
\begin{theorem} Let $a\in(\mathbb{C}^*)^s$, $x\in\mathcal{X}^s$ and define $\underline{a}\eqdef\mathrm{min}_{1\leq i\leq s}\{|a_i|^2,|a_i|^{-2}\}$ and ${D\eqdef \underline{a}\,\mathrm{min}(\sqrt{s},\sqrt{s}/\|a\|_2,1)}$. Assume that $(\hat{x}_k,\theta_k)_{1\leq k\leq m}$ are sampled so that $(\kappa(\hat{x}_k-\theta_k))_{1\leq k\leq m}$ are i.i.d. samples from $\mathcal{U}(B(0,2\kappa))$, and that~${y=(u^{\infty,b}(\hat{x}_k,\theta_k)+w_k)_{1\leq k\leq m}}$ with $w\in \mathbb{C}^m$. If
	\begin{equation*}
		m\gtrsim s^{3/2}\,\log(\kappa/\rho),\qquad\lambda^b\lesssim D/s,\qquad \|w\|_2\lesssim \lambda^b\,,
	\end{equation*}
	for some $\rho\in(0,1)$ and
	$$\mathrm{min}_{i\neq j}|x_{i}-x_{j}|\gtrsim\frac{s^{2/(d+1)}}{\kappa}\,,$$
 then, with probability at least $1-\rho$, the solution $\hat{\mu}$ of \cref{blasso2} is unique and made of exactly $s$ spikes. Moreover, writing~$\hat{\mu}=\sum_{i=1}^s \hat{a}_i\delta_{\hat{x}_i}$, we have:
	\begin{equation*}
		\|\hat{a}-a\|_2+\kappa \|\hat{x}-x\|_2\lesssim \frac{\sqrt{s}(\lambda^b+\|w\|_2)}{\mathrm{min}_{1\leq i\leq s} |a_i|}\,.
	\end{equation*}
\label{prop_exact_supp_rec}
	\end{theorem}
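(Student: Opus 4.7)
The plan is to deduce the statement from \cite[Theorem~1]{poonSupportLocalizationFisher2019}, which gives exact support recovery for the BLASSO with measurements of the form \eqref{meas_op} under three ingredients: an admissibility/Fisher-information condition on the mean sampling kernel, a minimum separation condition expressed via the kernel's effective bandwidth, and quantitative scalings of $m$, $\lambda^b$ and $\|w\|_2$. All the work therefore lies in verifying these hypotheses for the Fourier features $\varphi_\omega = e^{-i\langle \omega,\cdot\rangle}$ with $\omega \sim \mathcal{U}(B(0,2\kappa))$, which is exactly the ``new'' sampling setting announced in the contributions.

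First I would identify the translation-invariant mean kernel $K(x-y) = \mathbb{E}_\omega[e^{-i\langle \omega,x-y\rangle}]$ (a rescaled Bessel function) and compute its low-order derivatives at $0$. The key observation is that $K(0)=1$ and, by symmetry,
$$-\nabla^2 K(0) = \mathbb{E}_\omega[\omega\otimes\omega] = \frac{4\kappa^2}{d+2}\,Id,$$
so the kernel's effective bandwidth along each axis is precisely $\delta = 2\kappa/\sqrt{d+2}$, matching the normalization used in \Cref{sec_th_rec}. Plugging these moments into the Fisher information computation of \cite{poonSupportLocalizationFisher2019} turns their abstract minimum separation hypothesis into the explicit bound $\min_{i\neq j}|x_i-x_j|\gtrsim s^{2/(d+1)}/\kappa$ present in the statement. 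The sample complexity $m \gtrsim s^{3/2}\log(\kappa/\rho)$, the scaling $\lambda^b \lesssim D/s$, the noise ceiling $\|w\|_2 \lesssim \lambda^b$, and the error estimate $\|\hat a - a\|_2 + \kappa\|\hat x - x\|_2 \lesssim \sqrt{s}(\lambda^b + \|w\|_2)/\min_i|a_i|$ are then just \cite[Theorem~1]{poonSupportLocalizationFisher2019} rewritten in the present notation, the factor $\kappa$ in front of the spatial error being again the effective bandwidth. The $\log(\kappa/\rho)$ dependence comes from the standard matrix-Bernstein concentration argument used there, once the covering number of $\mathcal{X}$ is rescaled by $\delta$.

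The main obstacle I anticipate is that the assumptions of \cite{poonSupportLocalizationFisher2019} are formulated somewhat abstractly and are spelled out concretely in that paper only for illustrative examples (mostly Gaussian sampling), where moment computations and decay of the kernel simplify drastically. In our case, one must genuinely verify them for the bounded uniform distribution on $B(0,2\kappa)$: this requires controlling $K$ and its first two spatial derivatives both near the diagonal (non-degeneracy, already handled by the Hessian computation above) and globally on $\mathcal{X}$ (enough decay so that the dual certificate constructed in \cite{poonSupportLocalizationFisher2019} separates the support points from the rest of $\mathcal{X}$), uniformly in $\kappa$ once distances are rescaled by $\delta$. The explicit Bessel expression of $K$ together with standard stationary-phase-type estimates supply the required bounds. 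Once this verification is complete, the conclusion follows directly from \cite[Theorem~1]{poonSupportLocalizationFisher2019}, and essentially the same kernel estimates are also what underlie the sibling result \Cref{stable_recovery}.
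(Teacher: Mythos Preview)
Your proposal is correct and follows essentially the same route as the paper: reduce everything to verifying the abstract hypotheses of \cite[Theorem~1]{poonSupportLocalizationFisher2019} for the translation-invariant kernel $K(x,x')=\rho(2\kappa|x-x'|)$ with $\rho(s)=(2/s)^{d/2}\Gamma(d/2+1)J_{d/2}(s)$, read off the Fisher metric $\mathfrak{g}_x=\tfrac{4\kappa^2}{d+2}Id$, and then translate the separation, sample-complexity and error bounds into the present notation.

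Two minor points where the paper's execution differs from your sketch. First, the verification requires control not only of $K$ and its first two derivatives but of covariant derivatives $K^{(ij)}$ for all $0\le i,j\le 2$ (hence fourth-order derivatives of $\rho$) and of the feature Lipschitz constants $L_j$ up to $j=3$; the paper handles these via explicit recursions $\rho^{(k)}(s)=(2/s)^{d/2}\Gamma(d/2+1)\cdot[\text{polynomial in }J_{d/2+\ell}(s)/s^m]$ together with the elementary envelope $|J_\alpha(x)|\le 0.8\,x^{-1/3}$ from \cite{landauBesselFunctionsMonotonicity2000}, rather than stationary-phase arguments. Second, the $s^{2/(d+1)}$ in the separation condition is obtained precisely from the $t^{-(d+1)/2}$ decay of $\|K^{(ij)}\|$ inherited from $J_\alpha(t)=O(t^{-1/2})$, which bounds the kernel width $W(h,s)$; this is the only place where the polynomial-in-$s$ factor enters, and your outline correctly locates it.
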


The proof of \Cref{stable_recovery,{prop_exact_supp_rec}} is postponed to \Cref{appendix_kernel}. These results are a consequence of \cite[Theorem 3]{poonGeometryOfftheGridCompressed2023} and \cite[Theorem~1]{poonSupportLocalizationFisher2019}. We prove their applicability in the case of continuous Fourier sampling with frequencies uniformly distributed in a Euclidean ball of fixed radius.
 
We end this section with two remarks on the above results.
\begin{remark}
	Our bound on $\|u^{\infty}-u^{\infty,b}\|_{\infty}$ (see \Cref{sec_bound_lin_err}) yields a bound on $\|w^{\rm lin}\|_2$, which can in turn be used in \Cref{stable_recovery,prop_exact_supp_rec} by taking $w=w^{\rm lin}+w^{\rm noise}$. Indeed, we have
	\begin{equation*}
		\begin{aligned}
			\|w^{\rm lin}\|_2=\|\Phi^b_x a-\Phi_x^{f}a\|_2&=\sqrt{\sum\limits_{k=1}^m \left(\frac{1}{\sqrt{m}}u^{\infty}(\hat{x}_k,\theta_k)-\frac{1}{\sqrt{m}}u^{\infty,b}(\hat{x}_k,\theta_k)\right)^2}\\
			&=\sqrt{\frac{1}{m}\sum\limits_{k=1}^m \left|u^{\infty}(\hat{x}_k,\theta_k)-u^{\infty,b}(\hat{x}_k,\theta_k)\right|^2}\leq \|u^{\infty}-u^{\infty,b}\|_{\infty}\,.
		\end{aligned}
	\end{equation*}
\end{remark}

\begin{remark}
One could wonder how changing the sampling scheme $\Lambda$ would impact the results of \Cref{stable_recovery,prop_exact_supp_rec}. Owing to the proofs of \cite[Theorem 3]{poonGeometryOfftheGridCompressed2023} and \cite[Theorem 1]{poonSupportLocalizationFisher2019}, we only expect this choice to impact the minimal separation distance condition (at least for reasonable choices of sampling schemes). In fact, we believe that sampling the lower frequencies more often than the higher frequencies (instead of sampling them uniformly as above) would yield a kernel with a faster decay (see \Cref{appendix_kernel}). Given that~$d=2$ or $3$ in our case, this could allow for a minimal separation of the form $$\underset{i\neq j}{\mathrm{min}}~{|x_i-x_j|}\gtrsim \frac{1}{\kappa}\,,$$
without any dependence on the sparsity $s$ (see \cite[Section 2.1 and Lemma 17]{poonGeometryOfftheGridCompressed2023}).
\label{remark_sampling}
\end{remark}

\section{Reconstruction algorithm}
\label{sec_algo}
\paragraph{Description.} As discussed in Section~\ref{sec:overview}, the algorithm we propose to recover $\mu$ from $y$ is made of two main steps. The first one consists in solving \Cref{blasso2} using the sliding Frank-Wolfe algorithm introduced in \cite{denoyelleSlidingFrankWolfe2019}. The second one consists in locally minimizing~\Cref{blasso_obj_nonlin} using the output of the first step as initialization. This procedure is precisely described in Algorithm \ref{sfw}.

\begin{algorithm}[h!]
	\DontPrintSemicolon
  \KwData{measurements $y$, regularization parameters $(\lambda^b,\lambda^{f})$, stopping criterion tolerance $\epsilon$}
  \KwResult{estimated measure $\hat{\mu}$}
  $k\gets 0$\;
  $\mu^{[0]}\gets 0$\;
  \While{true}{
  $\eta^{[n]}\gets (\Phi^b)^*(y-\Phi^b \mu^{[n]})$\;
  $x_*\gets \underset{\substack{x\in\mathcal{X}}}{\text{Argmax}}~|\eta^{[n]}(x)|$~\tcp{new atom finding}
  \eIf{$|\eta^{[n]}(x_*)|\leq \lambda^b(1+\epsilon)$ and $n\geq 1$}{
	\tcp{nonlinear sliding step}
	find a critical point $(a^{*},x^{*})$ of $(a,x)\mapsto \frac{1}{2}\|\Phi^{f}_x a-y\|_2^2+\lambda^{f}\|a\|_1$ by performing a local descent initialized with~${(a^{[n]},x^{[n]})}$\;
	$\hat{\mu}\gets\sum_{i}a^{*}_i \delta_{x^{*}_i}$\;
	remove atoms with zero amplitude\;
  output $\hat{\mu}$\;}
  {$x^{[n+1/2]}\gets(x^{[n]},x_*)$\;
  $a^{[n+1/2]}\gets \underset{a}{\text{Argmin}}~\frac{1}{2}\|\Phi^b_{x^{[n+1/2]}}a-y\|_2^2+\lambda^b\|a\|_1$~\tcp{weights update}
  \tcp{linear sliding step}
  find a critical point $(a^{[n+1]},x^{[n+1]})$ of $(a,x)\mapsto \frac{1}{2}\|\Phi^b_x a-y\|_2^2+\lambda^b\|a\|_1$ by performing a local descent initialized with~${(a^{[n+1/2]},x^{[n+1/2]})}$\;
  $\mu^{[n+1]}\gets\sum_{i}a^{[n+1]}_i \delta_{x^{[n+1]}_i}$\;
  remove atoms with zero amplitude and merge spikes with same locations\;
  $n\gets n+1$\;
  }
  }
  \caption{Reconstruction of a discrete measure from far field measurements}
  \label{sfw}
  \end{algorithm}
  
\paragraph{Stopping condition and convergence of the linear step.} The condition of Line 6 is the standard stopping criterion for the sliding Frank-Wolfe algorithm (see \cite{denoyelleSlidingFrankWolfe2019}). In particular, it guarantees that the estimated measure is at most $\epsilon$-suboptimal in terms of the objective value for \eqref{blasso2}. By \cite[Lemma 5, Proposition~5]{denoyelleSlidingFrankWolfe2019}, we have that the sequence of iterates produced by the linear step (i.e.\ without performing Lines 6-10) converges in objective value towards the value of \Cref{blasso2} at a rate $O(1/n)$. Moreover, it has an accumulation point for the weak-* topology on $\mathcal{M}(\mathcal{X})$, which is a solution of \Cref{blasso2}. We also stress that, under a non-degeneracy assumption, Theorem 3 in the above reference yields the weak-* convergence of the sequence towards the unique minimizer of \Cref{blasso2} in a finite number of steps.

\paragraph{Implementation.} The implementation of Algorithm \ref{sfw} requires several oracles to carry out the operations of Lines 5, 7, 13 and~14. We refer the reader to \cite[Remark 9]{denoyelleSlidingFrankWolfe2019} regarding the operations of Lines 5, 13 and~14. The local minimization of Line 7 is performed as in Line 14. To solve the various optimization problems appearing in Algorithm \ref{sfw}, we rely on the Python libraries JAX \cite{bradburyJAXComposableTransformations2018} and JAXopt \cite{blondelEfficientModularImplicit2022}.

\paragraph{Hyperparameters.} Algorithm \ref{sfw} involves several hyperparameters. The main ones are the regularization parameters for the linear and nonlinear steps $\lambda^b$ and $\lambda^f$, as well as the stopping criterion tolerance $\epsilon$. When carrying out Lines 9 and 16, we also need to define two thresholds: one below which the amplitudes are set to zero, and one below which neighboring spikes are merged\footnote{We stress that, due to the linearity of $\Phi^b$ and the nonlinearity of $\Phi^f$, we merge neighboring spikes only in the linear step.}. The most critical parameters are $\lambda^b$ and $\lambda^f$. In the literature on variational regularization, they are usually selected after performing a grid search, or using more advanced techniques under statistical assumptions on the noise. We leave the design of rules for automatically choosing them to future works, and provide in \Cref{sec_num_res} the set of parameters used for each experiment. Let us point out that $\lambda^b$ should be chosen according to an estimate of the linearization error, which could be difficult to know a priori in practice.

\section{Numerical results}
\label{sec_num_res}
In this section, we investigate the performance of Algorithm \ref{sfw}. We consider the two-dimensional case. For visualization purposes, we restrict ourselves to the case of real amplitudes, i.e.\ $a\in\RR^s$. All the experiments below can be reproduced using the code available at~\url{https://github.com/rpetit/pointscat}. The domain~$\mathcal{X}$ is taken to be $(-r/2,r/2)^2$ with $r=5$ in the first setting and $r=10$ in the second one.
\begin{figure}
	\subfigure[No measurement noise.]{\includegraphics[width=\textwidth]{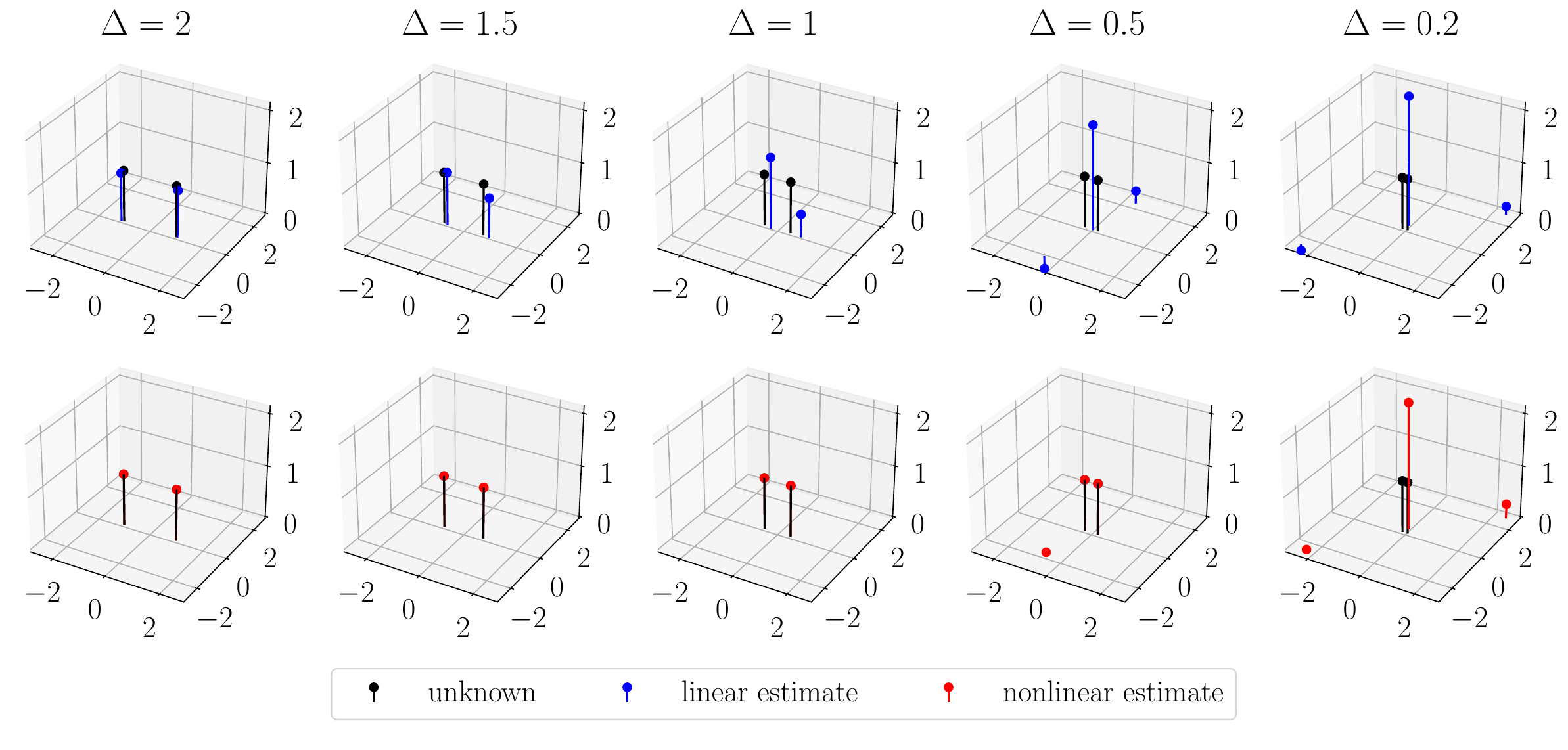}\label{fig:exp_1_1}}
	\subfigure[Gaussian measurement noise with standard deviation $0.1$.]{\includegraphics[width=\textwidth]{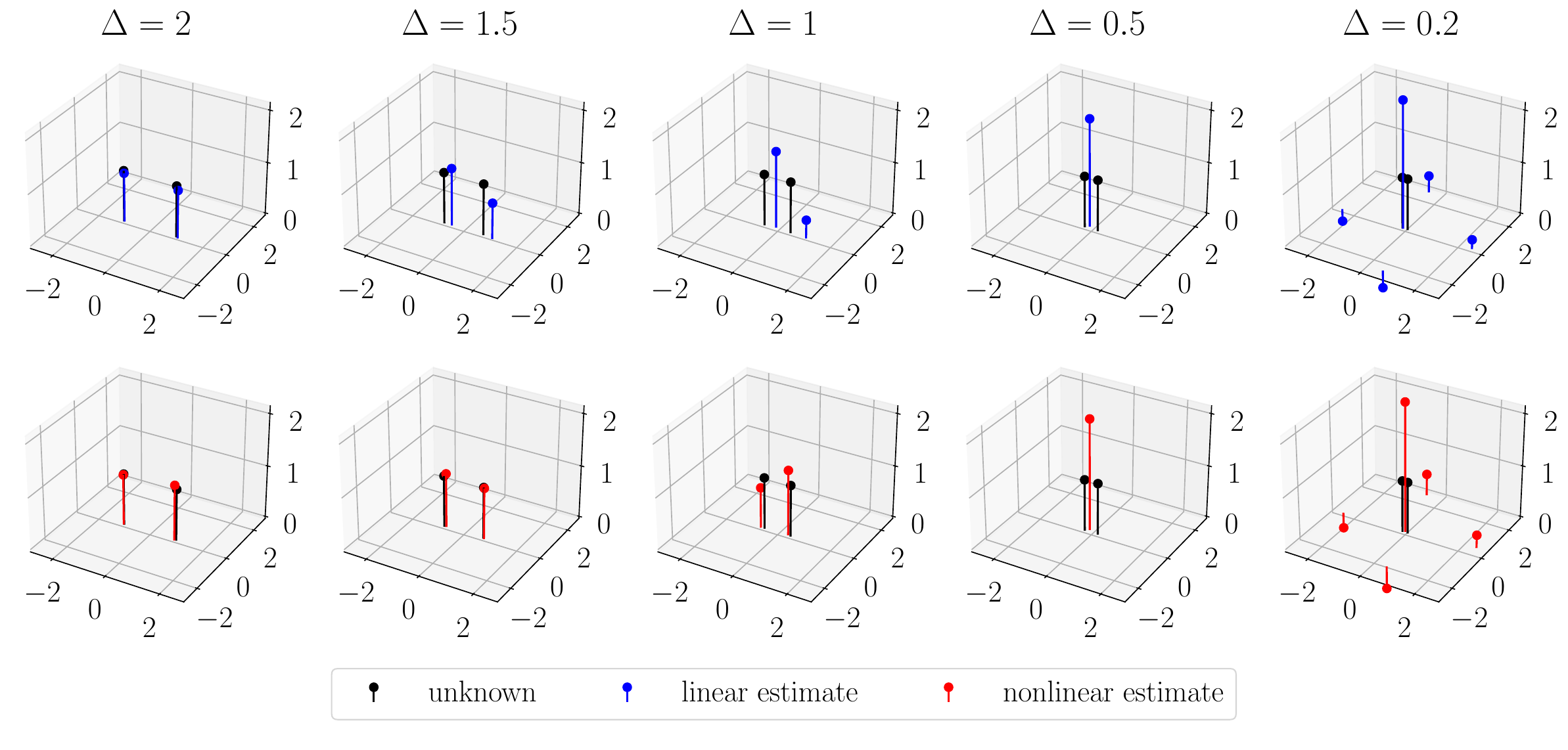}\label{fig:exp_1_2}}
	\caption{Linear and nonlinear estimates for two unknown scatterers with several separation distances~$\Delta$, obtained with $m=20$ measurements.}\label{fig:exp_1}
   \end{figure}

\paragraph{Two scatterers.} In \Cref{fig:exp_1}, we present the output of Algorithm \ref{sfw} in a first simple setting. We refer to the output of Algorithm \ref{sfw} without performing Lines 6-10 as the \emph{linear estimate} and to the output of the full algorithm as the \emph{nonlinear estimate}. The unknown measure is $\mu=\delta_{x_1}+\delta_{x_2}$ with $x_1=(-\Delta/2, 0)$ and~$x_2=(\Delta/2,0)$, the wave number is~${\kappa=1}$, and the number of randomly sampled frequencies is $m=20$. In \Cref{fig:exp_1_1}, there is no noise on the observations and we fix $\lambda^b=0.5$ and $\lambda^f=10^{-3}$. In \Cref{fig:exp_1_2}, the measurement noise is Gaussian and has standard deviation~$0.1$~(corresponding to a relative $\ell_2$ noise level of~$11\%,11\%,10\%,8\%,6\%$ respectively) and we fix~${\lambda^b=1.0}$ and $\lambda^f=0.1$. We observe that, in both cases, the quality of the linear and nonlinear estimates deteriorates as $\Delta$ decreases. This is expected from the results of \Cref{sec:rec}: a smaller $\Delta$ calls for a larger $\kappa$ (\Cref{stable_recovery,prop_exact_supp_rec}),  yielding in turn a larger linearization error (\Cref{sec_bound_lin_err}), also because $\phi$ is non-increasing. In some cases, the nonlinear step allows us to significantly improve the quality of the reconstruction. For very close scatterers, neither the linear nor the nonlinear estimation allow us to accurately recover the unknown measure.

\paragraph{A more challenging example.} In \Cref{fig:exp_2}, we show the output of Algorithm \ref{sfw} for a measure $\mu$ made of~$9$ spikes and $m=100$ sampled frequencies. The measurement noise is Gaussian and has standard deviation~$0.1$~(corresponding to a relative $\ell_2$ noise level of $6\%$). This setting is already highly nonlinear, since the relative $\ell_2$ error between the noiseless measurements and those corresponding to the Born approximation is of $24\%$. Yet, the linear estimate is close to the unknown measure and the nonlinear step allows further improvement in the quality of the reconstruction.

\begin{figure}[h!]
	\centering
	\includegraphics[width=.8\textwidth]{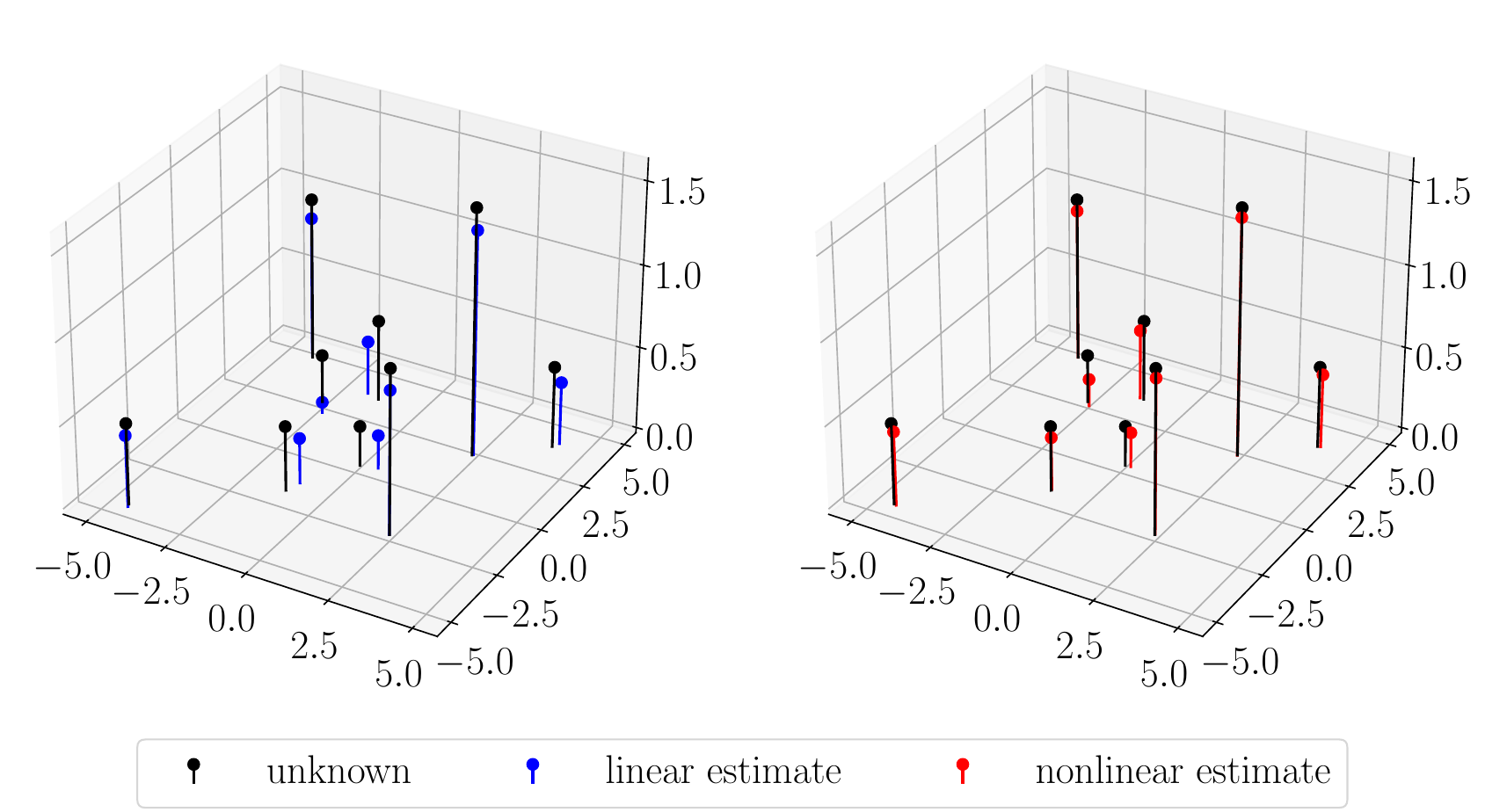}
	\caption{Linear and nonlinear estimates for nine unknown scatterers with $m=100$ measurements and Gaussian measurement noise with standard deviation $0.1$.}
	\label{fig:exp_2}
\end{figure}

\paragraph{Influence of the initialization of the nonlinear step.} As a final experiment, we investigate the outcome of the nonlinear step if, instead of being initialized with the linear estimate, we initialize it with a discrete measure whose support is a uniform discretization of the domain. We use the same experimental setup as in \Cref{fig:exp_2}. In \Cref{fig:exp_3}, we show that even a rather fine initialization might fail to recover one of the scatterers. Still, increasing the number of initial spikes allows for an accurate recovery. This experiment suggests to further analyze the nonlinear dynamic, and in particular the existence of spurious critical points.

\begin{figure}
	\centering
	\includegraphics[width=.8\textwidth]{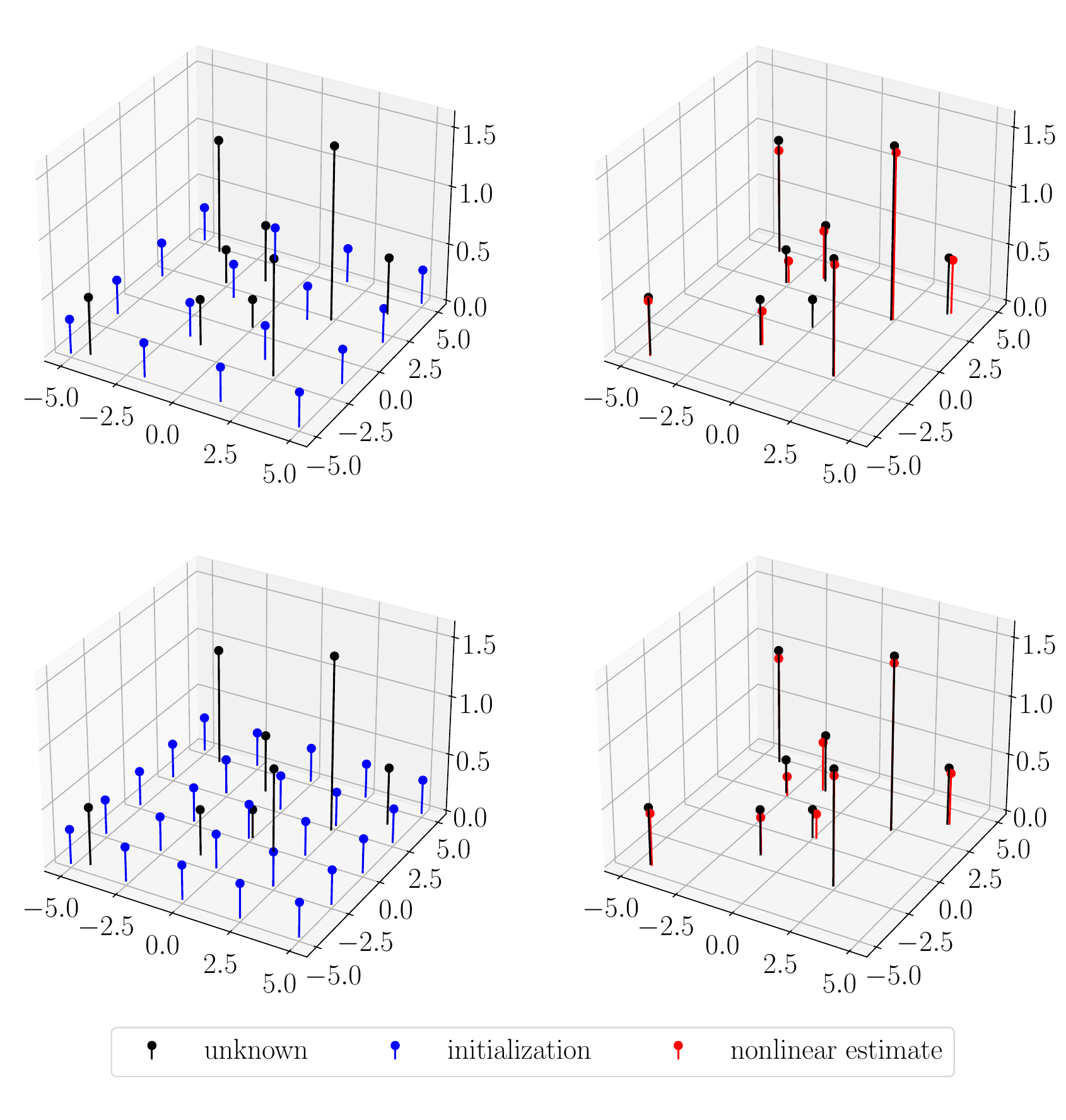}
	\caption{Output of the nonlinear step initialized with a measure supported on a $4\times 4$ (top row) and a $5\times 5$ (bottom row) regular grid. The experimental setup is the same as in \Cref{fig:exp_2}.}
	\label{fig:exp_3}
\end{figure}

\section{Conclusion}
In this paper, we proposed to connect the inverse scattering problem associated to pointwise inhomogeneities with the sparse spikes problem. We leveraged existing results on the sparse spikes problem to obtain guarantees for a reconstruction method involving the linearized forward operator. We showed that the error is controlled by the strength of the scatterers and the minimal separation distance between them. Our analysis suggests the existence of a tradeoff in the choice of the incident frequency: a low frequency allows us to reduce the linearization error but imposes a larger separation distance between the scatterers. Finally, we proposed a refinement step involving the nonlinear forward operator, and provided numerical evidence of improvements in the reconstructions with respect to the linear estimation procedure.

A natural question that we left open is to understand when the nonlinear step initialized with the output of the linear step allows us to recover the unknown accurately. One could start by investigating a simpler setting, where the scatterers are constrained to lie on a uniform discretization of the domain.

Future works could also include taking into account higher order terms in the first estimation step, for example by using the lifting and relaxation strategies developed for quadratic inverse problems (see for instance \cite{candesPhaseLiftExactStable2013}). This would yield an estimation error of the form $\alpha^2/(1-\alpha)$ instead of $\alpha/(1-\alpha)$ as in \Cref{gen_bound_1}, but would not allow to lift the requirement $\alpha<1$. Another possible direction could be to rely on consecutive approximations. By inserting the estimates of $a$ and $x$ obtained in the linear step in the system~\eqref{foldy}, its solution $u$ (instead of $u^{in}(x)$ in the Born approximation) could be in turn inserted in \eqref{far_field}, preserving the linearity of the forward map. Repeating this procedure could lead to more accurate estimates.

Finally, one could also wish to consider a more realistic measurement acquisition scenario, i.e.\ observations of the form $u^{\infty}(\hat{x}_{k_1},\theta_{k_2})$ with $1\leq k_1\leq m_1$ and $1\leq k_2\leq m_2$ instead of $u^{\infty}(\hat{x}_k,\theta_k)$ with $1\leq k\leq m$ \cite{gilbertNonlinearIterativeHard2020,albertiInfiniteDimensionalInverseProblems2022}. This would correspond to fixing the observation directions (and hence the set of sensing devices used), and, for every incident wave sent in the medium, acquire a full set of measurements. Our reconstruction algorithm applies to this setting, but the results of \Cref{sec_th_rec} correspond instead to acquiring a single measurement for every incident wave. We stress that treating the former scenario would require the development of different tools, as the corresponding frequencies would no longer be independent and identically distributed. In a similar spirit, it would be interesting to investigate the problem when the Foldy-Lax system is replaced by more advanced models for the scattering by small inhomogeneities \cite{carminatiPrinciplesScatteringTransport2021}.

\section*{Acknowledgments}
The authors thank Mourad Sini for valuable discussions about this work. Co-funded by the European Union (ERC, SAMPDE, 101041040). Views and opinions
expressed are however those of the authors only and do not necessarily reflect
those of the European Union or the European Research Council. Neither the
European Union nor the granting authority can be held responsible for them. GSA and MS are members of the ``Gruppo Nazionale per l’Analisi Matematica, la Probabilità e le loro Applicazioni'', of the ``Istituto Nazionale di Alta Matematica''. The research was supported in part by the MIUR Excellence Department Project awarded to Dipartimento di Matematica, Università di Genova, CUP D33C23001110001. Funded by European Union – Next Generation EU.

\bibliography{ref}
\bibliographystyle{plain}

\appendix
\section{Proof of Theorems \ref{stable_recovery} and \ref{prop_exact_supp_rec}}
\label{appendix_kernel}
In this section, we prove \Cref{stable_recovery,prop_exact_supp_rec} by showing that the assumptions of \cite[Theorem 3]{poonGeometryOfftheGridCompressed2023} and \cite[Theorem 1]{poonSupportLocalizationFisher2019} are satisfied in our setting. This amounts to studying the properties of a kernel $K\colon\mathcal{X}\times\mathcal{X}\to\RR$ associated to the measurement operator. For a forward operator of the form \eqref{meas_op}, $K$ is defined by
\begin{equation}
    K(x,x')\eqdef \int_{\mathcal{X}} \overline{\varphi_{\omega}(x)}\varphi_{\omega}(x') d\Lambda(\omega),\quad(x,x')\in\mathcal{X}\times\mathcal{X}\,.
    \label{def_kernel}
\end{equation}
We first begin by deriving the expression of $K$ in our setting, and then turn to the study of its properties.
\begin{proposition}
    If $\varphi_{\omega}=e^{-i\langle \omega,\cdot\rangle}$ and $\Lambda$ is the uniform law on $B(0,2\kappa)$, then for every $x,x'\in\mathcal{X}$ we have~$K(x,x')=\rho(\sigma^{-1}|x-x'|)$ with $\sigma^{-1}\eqdef 2\kappa$ and $\rho(s)\eqdef (2/s)^{d/2} \Gamma(d/2+1) J_{d/2}(s)$, where $J_{\alpha}$ denotes the Bessel function of the first kind and order $\alpha$.
\end{proposition}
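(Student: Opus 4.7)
The plan is to reduce the claim to the classical computation of the Fourier transform of the indicator function of a Euclidean ball. Plugging in $\varphi_\omega=e^{-i\langle\omega,\cdot\rangle}$ and $\Lambda=\mathcal{U}(B(0,2\kappa))$ into \eqref{def_kernel}, one immediately gets
\begin{equation*}
K(x,x') = \frac{1}{|B(0,2\kappa)|}\int_{B(0,2\kappa)} e^{-i\langle\omega,\,x'-x\rangle}\,d\omega,
\end{equation*}
so that $K(x,x')$ is a function of $z\eqdef x'-x$ alone. Setting $R\eqdef 2\kappa$, it suffices to show that
\begin{equation*}
I(z)\eqdef \int_{B(0,R)} e^{-i\langle\omega,z\rangle}\,d\omega \;=\; (2\pi)^{d/2}\,\frac{R^{d/2}}{|z|^{d/2}}\,J_{d/2}(R|z|).
\end{equation*}

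For this step I would exploit the rotational invariance of the domain: because $B(0,R)$ is radial, $I(z)$ depends only on $|z|$, so one can assume $z=|z|e_d$ and introduce spherical coordinates. Writing $\omega=r\xi$ with $r\in(0,R)$ and $\xi\in\mathbb{S}^{d-1}$, the angular integral $\int_{\mathbb{S}^{d-1}} e^{-ir|z|\,\xi_d}\,d\sigma(\xi)$ is the Fourier transform of the uniform measure on the sphere, which equals $(2\pi)^{d/2}(r|z|)^{1-d/2}J_{d/2-1}(r|z|)$ by the standard formula (derivable, for example, via the Funk--Hecke formula or the integral representation of $J_{d/2-1}$). Substituting this expression and using the Bessel identity $\int_0^R r^{d/2}J_{d/2-1}(r|z|)\,dr = R^{d/2}J_{d/2}(R|z|)/|z|$ (which follows from $\frac{d}{dt}[t^{d/2}J_{d/2}(t)]=t^{d/2}J_{d/2-1}(t)$) yields the claimed closed form for $I(z)$.

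It then remains to combine this with the explicit ball volume $|B(0,R)| = \pi^{d/2}R^d/\Gamma(d/2+1)$ and simplify:
\begin{equation*}
K(x,x') = \frac{\Gamma(d/2+1)}{\pi^{d/2}R^d}\cdot(2\pi)^{d/2}\frac{R^{d/2}}{|z|^{d/2}}J_{d/2}(R|z|)
= \Gamma(d/2+1)\Bigl(\frac{2}{R|z|}\Bigr)^{d/2}J_{d/2}(R|z|).
\end{equation*}
Setting $s=R|z|=\sigma^{-1}|x-x'|$ gives exactly $K(x,x')=\rho(s)$, as claimed.

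The only real work is the derivation of $I(z)$, and even that is a textbook identity; the rest is bookkeeping of constants. The main pitfall to watch for is normalization conventions for both the Fourier transform and the Bessel functions, which is easy to get wrong by a factor of $(2\pi)^{d/2}$ or a sign in the Bessel order. Beyond that, no genuine obstacle arises, and the proof can be kept to essentially one displayed computation.
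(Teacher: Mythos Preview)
Your proposal is correct and follows exactly the approach sketched in the paper: the paper's proof simply observes that $K(x,x')$ is (proportional to) the Fourier transform of $\mathbf{1}_{B(0,\sigma^{-1})}$ at $x-x'$, and you have filled in the standard computation of that Fourier transform via spherical coordinates and the Bessel identity $\frac{d}{dt}[t^{d/2}J_{d/2}(t)]=t^{d/2}J_{d/2-1}(t)$. Your bookkeeping of the constants and of $|z|=|x-x'|$ checks out.
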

\begin{proof}
    A simple computation shows that $K(x,x')$ is proportional to the Fourier transform of $\mathbf{1}_{B(0,\sigma^{-1})}$ at~$x-x'$, which allows us to conclude.
\end{proof}
We notice that, since $J_{\alpha}(s)\sim(s/2)^{\alpha}/\Gamma(\alpha+1)$ at $0$, we have $K(x,x)=\mathbb{E}_{\omega\sim\Lambda}[|\varphi_{\omega}(x)|^2]=\rho(0)=1$ for every $x\in\mathcal{X}$. For convenience, we define $\rho_{\sigma}\eqdef \rho(\sigma^{-1}\cdot)$. Let us now list a few notations and definitions introduced in~\cite{poonSupportLocalizationFisher2019,poonGeometryOfftheGridCompressed2023}, before stating the various results we need to prove. 
\paragraph{The Fisher metric.} In \cite{poonSupportLocalizationFisher2019,poonGeometryOfftheGridCompressed2023}, a special metric on~$\mathcal{X}$ related to $K$ is introduced. In our case, the metric tensor $\mathfrak{g}_x\eqdef\nabla_1\nabla_2 K(x,x)$ is constant and $\mathfrak{g}_x=-\rho_{\sigma}''(0)\,Id=-\sigma^{-2}\rho''(0)\,Id$ with~$\rho''(0)=-1/(d+2)$. Hence, the associated distance is given by 
\begin{equation*}
    \mathfrak{d}_{\mathfrak{g}}(x,x')=\sqrt{|\rho_{\sigma}''(0)|}\,|x-x'|=\sqrt{-\rho''(0)}\,\sigma^{-1}|x-x'|,\qquad x,x'\in\mathcal{X}.
\end{equation*}
For every $u\in\mathbb{C}^d$, we define $|u|_x\eqdef \sqrt{u^*\mathfrak{g}_x u}=\sigma^{-1}\sqrt{-\rho''(0)}\,|u|$. 

\paragraph{Covariant derivatives.} For $j\in\{0,1,2\}$, the ``covariant derivative'' $D_j[f](x)\colon(\mathbb{C}^d)^j\to\mathbb{C}$ are defined as
\begin{equation*}
        D_0[f](x)\eqdef f(x),~~~D_1[f](x)[v]\eqdef v^T\nabla f(x),~~~D_2[f](x)[v_1,v_2]\eqdef v_1^T \nabla^2 f(x) v_2.
\end{equation*} The associated operator norms are defined by 
\begin{equation*}
 \|D_1[f]\|_x\eqdef \|\mathfrak{g}_x^{-1/2}\nabla f(x)\|_2,~~~\|D_2[f]\|_x\eqdef \|\mathfrak{g}_x^{-1/2}\nabla^2 f(x)\mathfrak{g}_x^{-1/2}\|_2,
\end{equation*} 
and we also define $L_0(\omega)\eqdef \|\varphi_{\omega}\|_{\infty}$, $L_j(\omega)\eqdef\mathrm{sup}_{x\in\mathcal{X}}\,\|D_j[\varphi_{\omega}]\|_x,~(j=1,2)$. Given $0\leq i,j\leq 2$, let~$K^{(ij)}(x,x')$ be defined, for~$Q\in(\mathbb{C}^d)^i$ and $V\in(\mathbb{C}^d)^j$, by
\begin{equation*}
    [Q]K^{(ij)}(x,x')[V]\eqdef \mathbb{E}\Big[\overline{D_i[\varphi_{\omega}](x)[Q]}D_j[\varphi_{\omega}](x')[V]\Big].
\end{equation*}
The associated operator norms are defined by
\begin{equation}
    \left\|K^{(ij)}(x,x')\right\|_{x,x'}\eqdef\underset{Q,V}{\mathrm{sup}}~[Q]K^{(ij)}(x,x')[V],
    \label{norm_cov_deriv_kernel}
\end{equation}
where the supremum is taken over all $Q=[q_1,...,q_i]$ and $V=[v_1,...,v_j]$ such that $|q_k|_x\leq 1$ for every~${k\in\{1,...,i\}}$ and $|v_l|_{x'}\leq 1$ for every $l\in\{1,...,j\}$. In \eqref{norm_cov_deriv_kernel}, we rather write $\|\cdot\|_x$ instead of $\|\cdot \|_{x,x'}$ when the dependance is only on $x$ (that is to say when $j=0$). We also define $B_{ij}\eqdef \mathrm{sup}_{x,x'\in\mathcal{X}}\,\|K^{(ij)}(x,x')\|_{x,x'}$.

\paragraph{Kernel width.} Finally, we define the kernel width of $K$, defined in \cite[Definition 3]{poonGeometryOfftheGridCompressed2023} as:
\begin{equation*}W(h,s)\eqdef \mathrm{inf}\left\{\Delta\,\Bigg\rvert\,\sum_{k=2}^s \|K^{(ij)}(x_1,x_k)\|_{x_1,x_k}\leq h,~(i,j)\in\{0,1\}\times\{0,2\},~(x_k)_{k=1}^s\in\mathcal{S}_{\Delta}\right\}
\end{equation*}
where $\mathcal{S}_{\Delta}\eqdef \{(x_k)_{k=1}^s\in\mathcal{X}^s\,\rvert\,\forall k\neq l,~\mathfrak{d}_{\mathfrak{g}}(x_k,x_l)\geq \Delta\}$.

\paragraph{Outline of the appendix.} First, we state useful properties of Bessel functions in \Cref{sec_bessel}. In \Cref{sec_deriv_kernel}, we give the expressions of the derivatives of the kernel and the operator norm of its covariant derivatives. The remaining of the appendix is dedicated to the proof that assumptions (i)\,-\,(v) below hold.
\begin{enumerate}[label=(\roman*)]
    \item $B_{ij}=\mathcal{O}(1)$ for every $(i,j)\in\{0,1,2\}$.
    \item There exist $r_{\rm near}>0$ and $0<\overline{\epsilon}_2<r_{\rm near}^{-2}$ such that $-K^{(02)}(x,x')[v,v]\geq \overline{\epsilon}_2|v|_x^2$ for every~$v\in\mathbb{C}^d$ as soon as $\mathfrak{d}_{\mathfrak{g}}(x,x')\leq r_{\rm near}$.
    \item There exist $\overline{\epsilon}_0<1$ such that $|K(x,x')|\leq 1-\overline{\epsilon}_0$ as soon as $\mathfrak{d}_{\mathfrak{g}}(x,x')\geq r_{\rm near}$.
    \item $W(h,s)=\mathcal{O}(s^{2/(d+1)})$ as soon as $h=\mathcal{O}(1)$.
    \item $\|L_j\|_{\infty}=\mathcal{O}(1)$ for every $j\in\{0,...,3\}$.\footnote{See \cite[Section 5.1]{poonGeometryOfftheGridCompressed2023} for the definition of $L_3$.}
\end{enumerate}
We stress that the constants appearing in the various bounds do not depend on $\sigma$. Provided~${\text{(i)\,-\,(v)}}$ hold, \cite[Theorem 3]{poonGeometryOfftheGridCompressed2023} and \cite[Theorem~1]{poonSupportLocalizationFisher2019} can be applied to our setting, which yields precisely \Cref{stable_recovery,prop_exact_supp_rec}.
\subsection{Bessel functions}
\label{sec_bessel}
In this subsection, we give two properties of Bessel functions that we use in the following. First, we have
\begin{equation}
    \forall \alpha>0,~\forall x>0,~J'_{\alpha}(x)-(\alpha/x)J_{\alpha}(x)=-J_{\alpha+1}(x)\,.
    \label{bessel_derivative}
\end{equation}
Then, we have the following bound, which implies the boundedness of $x\mapsto |J_{\alpha}(x)|/x^{\beta}$ for any~$\beta\leq\alpha$.
\begin{lemma}
	For every $\alpha>0$ and $x>0$, we have: 
	\begin{equation*}
		|J_{\alpha}(x)|\leq \mathrm{min}\left(\frac{1}{\Gamma(\alpha+1)}\left(\frac{x}{2}\right)^{\alpha}\left(\left|1-\frac{1}{\alpha+1}\frac{x^2}{4}\right|+\frac{1}{(\alpha+1)(\alpha+2)}\left(e^{x^2/4}-1-\frac{x^2}{4}\right)\right),\frac{0.8}{x^{1/3}}\right).
	\end{equation*}
	\label{lemma_bound_bessel}
	\end{lemma}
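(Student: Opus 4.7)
The plan is to prove the two bounds in the minimum separately and then take their minimum. The first bound is obtained by a direct analysis of the Taylor series of $J_\alpha$, while the second is a classical estimate of Landau that I would simply cite.

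For the first bound, I would start from the series representation
\begin{equation*}
J_\alpha(x) = \sum_{k=0}^{\infty} \frac{(-1)^k}{k!\,\Gamma(\alpha+k+1)}\left(\frac{x}{2}\right)^{\alpha+2k}
\end{equation*}
and factor out $(x/2)^\alpha/\Gamma(\alpha+1)$ to rewrite it as
\begin{equation*}
J_\alpha(x) = \frac{(x/2)^\alpha}{\Gamma(\alpha+1)} \sum_{k=0}^{\infty} \frac{(-1)^k}{k!}\,\frac{\Gamma(\alpha+1)}{\Gamma(\alpha+k+1)}\left(\frac{x^2}{4}\right)^{k}.
\end{equation*}
The next step is to split this series into the partial sum of the $k=0$ and $k=1$ terms, which equals $1 - \frac{1}{\alpha+1}\frac{x^2}{4}$ (this is the quantity whose absolute value appears in the statement), and the tail over $k\geq 2$. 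The key structural observation is that, for $k \geq 2$,
\begin{equation*}
\frac{\Gamma(\alpha+1)}{\Gamma(\alpha+k+1)} = \frac{1}{(\alpha+1)(\alpha+2)\cdots(\alpha+k)} \leq \frac{1}{(\alpha+1)(\alpha+2)}.
\end{equation*}
Applying the triangle inequality to the tail and factoring out $1/[(\alpha+1)(\alpha+2)]$, what remains is exactly $\sum_{k\geq 2}(x^2/4)^k/k! = e^{x^2/4}-1-x^2/4$. Combining the partial sum and the tail estimate via the triangle inequality yields the first part of the bound.

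For the second bound $|J_\alpha(x)|\leq 0.8\,x^{-1/3}$, I would invoke Landau's classical estimate, which asserts that $\sup_{\alpha\geq 0,\,x>0} x^{1/3}|J_\alpha(x)|$ is finite and bounded by a constant $b_L < 0.8$ (the sharp value is approximately $0.7857\ldots$). This is a standard result that can be cited from the literature on Bessel functions; no independent derivation is needed here. Taking the minimum of the two bounds then concludes the proof.

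The main obstacle I anticipate is merely bookkeeping in the first part: one has to keep the absolute value around $1 - \frac{x^2}{4(\alpha+1)}$ since this factor changes sign as $x$ grows past $2\sqrt{\alpha+1}$, and one must apply the triangle inequality in such a way as to split the first two terms (kept as a single signed quantity) from the higher-order tail, rather than bounding term by term—otherwise the near-cancellation between the $k=0$ and $k=1$ contributions is lost and the bound becomes much weaker for small $x$.
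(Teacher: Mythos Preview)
Your proposal is correct and follows essentially the same approach as the paper: factor out $(x/2)^\alpha/\Gamma(\alpha+1)$ from the series, separate the $k=0,1$ terms from the tail $k\ge 2$, bound the tail via $\Gamma(\alpha+1)/\Gamma(\alpha+k+1)\le 1/[(\alpha+1)(\alpha+2)]$ to recover $e^{x^2/4}-1-x^2/4$, and cite Landau for the $0.8\,x^{-1/3}$ bound. The paper does exactly this, with the same splitting and the same citation.
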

	\begin{proof}
		The inequality $|J_{\alpha}(x)|\leq 0.8 x^{-1/3}$ is proved in \cite{landauBesselFunctionsMonotonicity2000}. For the other inequality, we use the series expansion of $J_{\alpha}$:
		\begin{equation*}
			J_{\alpha}(x)=\frac{1}{\Gamma(\alpha+1)}\left(\frac{x}{2}\right)^{\alpha}\left(1-\frac{1}{\alpha+1}\frac{x^2}{4}+\Gamma(\alpha+1)\sum\limits_{p=2}^{+\infty} \frac{(-1)^p}{p!\,\Gamma(p+\alpha+1)}\left(\frac{x}{2}\right)^{2p}\right).
		\end{equation*}
	Then, we notice that 
	\begin{equation*}
		\begin{aligned}
			\left|\Gamma(\alpha+1)\sum\limits_{p=2}^{+\infty} \frac{(-1)^p}{p!\,\Gamma(p+\alpha+1)}\left(\frac{x}{2}\right)^{2p}\right|&\leq \Gamma(\alpha+1)\sum\limits_{p=2}^{+\infty}\frac{1}{p!\,\Gamma(p+\alpha+1)}\left(\frac{x}{2}\right)^{2p}\\
			&\leq \frac{\Gamma(\alpha+1)}{\Gamma(\alpha+3)}\sum\limits_{p=2}^{+\infty}\frac{1}{p!}\left(\frac{x^2}{4}\right)^{p}\\
			&=\frac{1}{(\alpha+1)(\alpha+2)}\left(e^{x^2/4}-1-\frac{x^2}{4}\right),
		\end{aligned}
	\end{equation*}
	which yields the result.
	\end{proof}

\subsection{Derivatives of the kernel}
\label{sec_deriv_kernel}
Let $x,x'\in\mathcal{X}$. We define $t=x-x'$ and, abusing notation, denote~$K(t)=\rho_{\sigma}(|t|)$. We have the following:
\begin{equation*}
	\begin{aligned}
		\nabla K(t)=&\frac{\rho_{\sigma}'(|t|)}{|t|}\,t,\\
		\nabla^2 K(t)=&\frac{\rho_{\sigma}'(|t|)}{|t|}\,Id+\left[\rho_{\sigma}''(|t|)-\frac{\rho_{\sigma}'(|t|)}{|t|}\right]\frac{tt^T}{|t|^2}\\
		=&\frac{\rho_{\sigma}'(|t|)}{|t|}\left(Id-\frac{tt^T}{|t|^2}\right)+\rho''_{\sigma}(|t|)\frac{tt^T}{|t|^2},\\
		\partial_i\nabla^2 K(t)=&~~~\left[\rho_{\sigma}''(|t|)-\frac{\rho_{\sigma}'(|t|)}{|t|}\right]\frac{t_i}{|t|^2}Id  + \left[\rho_{\sigma}'''(|t|)\frac{t_i}{|t|}-\frac{t_i}{|t|^2}\left(\rho''_{\sigma}(|t|)-\frac{\rho'_{\sigma}(|t|)}{|t|}\right)\right]\frac{tt^T}{|t|^2}\\
		&+\left[\rho_{\sigma}''(|t|)-\frac{\rho_{\sigma}'(|t|)}{|t|}\right]\frac{1}{|t|^2}\left(te_i^T+e_it^T-2t_i\frac{tt^T}{|t|^2}\right)\\
		=&~~~\left[\rho_{\sigma}''(|t|)-\frac{\rho_{\sigma}'(|t|)}{|t|}\right]\frac{1}{|t|^2}\left(te_i^T+e_it^T+t_i\left(Id-3\frac{tt^T}{|t|^2}\right)\right)+\rho'''_{\sigma}(|t|)\frac{t_i}{|t|}\frac{tt^T}{|t|^2}\,.\\
	\end{aligned}
\end{equation*}
Considering the definition of \eqref{norm_cov_deriv_kernel}, we also obtain:
\begin{align}
		&\|K^{(00)}(x,x')\|=|K(t)|=|\rho(\sigma^{-1}|t|)|\,,\label{k_00}\\
		&\|K^{(10)}(x,x')\|_{x}=\frac{\sigma}{\sqrt{-\rho''(0)}} \|\nabla K(t)\|_2=\frac{1}{\sqrt{-\rho''(0)}}|\rho'(\sigma^{-1}|t|)|\,,\label{k_10}\\
		&\|K^{(11)}(x,x')\|_{x,x'}=\frac{\sigma^2}{(-\rho''(0))} \|\nabla^2 K(t)\|_2\leq  \frac{1}{(-\rho''(0))}\left(\left|\frac{\rho'(\sigma^{-1}|t|)}{\sigma^{-1}|t|}\right|+\left|\rho''(\sigma^{-1}|t|)-\frac{\rho'(\sigma^{-1}|t|)}{\sigma^{-1}|t|}\right|\right),\label{k_11}\\
		&\|K^{(20)}(x,x')\|_x=\|K^{(11)}(x,x')\|_{x,x'}\,.\label{k_20}
\end{align}

\subsection{Uniform bounds}
\begin{lemma}
	For $(i,j)\in\{0,1,2\}$, we have $B_{ij}=\mathcal{O}(1)$.
	\label{lemma_uniform_bounds}
\end{lemma}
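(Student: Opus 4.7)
The plan is to leverage the closed-form expressions \eqref{k_00}--\eqref{k_20} derived in \Cref{sec_deriv_kernel} and reduce the boundedness of $B_{ij}$ to the uniform boundedness on $\mathbb{R}_+$ of a few simple functions of $\rho$ and its derivatives. Since all the relevant norms are expressed in terms of $\rho(u)$, $\rho'(u)$, $\rho'(u)/u$ and $\rho''(u)-\rho'(u)/u$ evaluated at $u=\sigma^{-1}|t|$, with $|t|=|x-x'|$, and since the prefactors $(-\rho''(0))^{-1/2}$ and $(-\rho''(0))^{-1}$ depend only on $d$, it suffices to show that each of these four functions is uniformly bounded on $(0,+\infty)$.

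The key identity is \eqref{bessel_derivative}. Writing $\rho(s)=c_d\,s^{-d/2}J_{d/2}(s)$ with $c_d\eqdef 2^{d/2}\Gamma(d/2+1)$, a direct application of \eqref{bessel_derivative} yields
\begin{equation*}
\rho'(s)=-c_d\,s^{-d/2}J_{d/2+1}(s),\qquad \frac{\rho'(s)}{s}=-c_d\,s^{-d/2-1}J_{d/2+1}(s),
\end{equation*}
and, applying \eqref{bessel_derivative} once more to $J_{d/2+1}$, a similar computation gives
\begin{equation*}
\rho''(s)-\frac{\rho'(s)}{s}=c_d\,s^{-d/2}J_{d/2+2}(s).
\end{equation*}
Thus each of the four relevant quantities has the form $c_d\,s^{-d/2-k}J_{d/2+k+\ell}(s)$ for some $k,\ell\in\{0,1\}$ with $d/2+k+\ell\geq d/2+k$. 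I would then invoke \Cref{lemma_bound_bessel}: the first part of the bound shows that $s^{-\alpha}J_\alpha(s)$ is bounded near $s=0$ (since $J_\alpha(s)\sim(s/2)^\alpha/\Gamma(\alpha+1)$), while the bound $|J_\alpha(s)|\leq 0.8\,s^{-1/3}$ shows that the same expression decays at infinity whenever $\alpha\geq 0$. Combining these two regimes with a trivial uniform bound on the compact intermediate interval yields uniform boundedness on $(0,+\infty)$ of each of $\rho$, $\rho'$, $\rho'/s$ and $\rho''-\rho'/s$.

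Plugging these uniform bounds into \eqref{k_00}--\eqref{k_20} gives $\|K^{(ij)}(x,x')\|_{x,x'}\leq C_d$ for some constant $C_d$ depending only on $d$, uniformly over $x,x'\in\mathcal X$ and, crucially, independently of $\sigma$ (equivalently of $\kappa$). Taking the supremum over $x,x'\in\mathcal X$ yields $B_{ij}=\mathcal{O}(1)$ for $(i,j)\in\{(0,0),(1,0),(1,1),(2,0)\}$, and the remaining cases $(0,1),(0,2)$ follow by the symmetry $K^{(ij)}(x,x')=\overline{K^{(ji)}(x',x)}$.

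The only slightly delicate point is the behavior near $s=0$ of the quantity $\rho'(s)/s$, which appears in \eqref{k_11}: a priori one divides by a vanishing quantity, but the factor $J_{d/2+1}(s)\sim(s/2)^{d/2+1}/\Gamma(d/2+2)$ precisely compensates, since it supplies one additional power of $s$ beyond what is needed for the $s^{-d/2}$ prefactor. This is exactly where the first bound in \Cref{lemma_bound_bessel} (rather than the cruder $s^{-1/3}$ bound) is essential. The rest of the argument consists only of routine estimates.
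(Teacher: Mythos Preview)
Your argument correctly handles $B_{00}$, $B_{10}$, $B_{11}$ and $B_{20}$ (and, by symmetry, $B_{01}$ and $B_{02}$), and your use of \eqref{bessel_derivative} and \Cref{lemma_bound_bessel} to bound $\rho$, $\rho'$, $\rho'(s)/s$ and $\rho''(s)-\rho'(s)/s$ is exactly what the paper does for those cases. However, the statement asks for all $(i,j)$ with $i,j\in\{0,1,2\}$, and you have omitted $B_{21}$ (equivalently $B_{12}$) and $B_{22}$ entirely. The closed-form expressions \eqref{k_00}--\eqref{k_20} that you rely on only cover the kernel and its derivatives up to total order two; there are no analogous ready-made formulas in \Cref{sec_deriv_kernel} for $K^{(21)}$ and $K^{(22)}$.

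These missing cases are not a trivial addendum. In the paper's proof they constitute the bulk of the work: one must compute $\partial_i\nabla^2 K$ and $\partial_{ij}\nabla^2 K$ explicitly, contract with vectors $q_1,q_2$, and then bound the resulting operator norms. This brings in the additional quantities $\rho'''(s)$, $\rho'''(s)/s$, $(\rho''(s)-\rho'(s)/s)/s$, $(\rho''(s)-\rho'(s)/s)/s^2$ and $\rho^{(4)}(s)$, each of which must be shown to be uniformly bounded. The paper does this by repeated application of \eqref{bessel_derivative} to express these in terms of $J_{d/2+2}$, $J_{d/2+3}$ (and quotients by powers of $s$), followed by \Cref{lemma_bound_bessel}. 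Your sentence ``the rest of the argument consists only of routine estimates'' glosses over precisely the part where the real bookkeeping lies; without it, the proof is incomplete.
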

\begin{proof}
	~

	\begin{itemize}
		\item $\bm{B_{00}}$: considering \eqref{k_00}, we have that $B_{00}\leq \|\rho\|_{\infty}$, which is finite by \Cref{lemma_bound_bessel}.
		\item $\bm{B_{10}}$: considering \eqref{k_10}, we have 
		$B_{10}\leq \sqrt{d+2}\,\|\rho'\|_{\infty}\leq \sqrt{5}\,\|\rho'\|_{\infty}$. Using \eqref{bessel_derivative}, we find 
  \begin{equation}
	\begin{aligned}
  \rho'(s)&=2^{d/2}\Gamma(d/2+1)(-(d/2)J_{d/2}(s)/s^{d/2+1}+J_{d/2}'(s)/s^{d/2})\\
  &=(2/s)^{d/2}\Gamma(d/2+1)(-(d/2)J_{d/2}(s)/s+(d/2)J_{d/2}(s)/s-J_{d/2+1}(s))\\
  &=(2/s)^{d/2}\Gamma(d/2+1)(-J_{d/2+1}(s))\,,
	\end{aligned}
	\label{rho_prime}
  \end{equation}
  and we can use \Cref{lemma_bound_bessel} again to get $B_{10}=\mathcal{O}(1)$.
		\item $\bm{B_{11}=B_{20}}$: considering \eqref{k_11} and \eqref{k_20}, we have
		\begin{equation*}
			B_{11}=B_{20}\leq (d+2)\left(\|\rho''\|_{\infty}+2~\underset{s> 0}{\mathrm{sup}}\,\left|\frac{\rho'(s)}{s}\right|\right).
		\end{equation*}
		Using \eqref{rho_prime} and \eqref{bessel_derivative} we get 
		\begin{equation}
			\begin{aligned}
			\rho''(s)&=2^{d/2}\Gamma(d/2+1)((d/2)J_{d/2+1}(s)/s^{d/2+1}-J_{d/2+1}'(s)/s^{d/2})\\
			&=(2/s)^{d/2}\Gamma(d/2+1)((d/2)J_{d/2+1}(s)/s-(d/2+1)J_{d/2+1}(s)/s+J_{d/2+2}(s))\\
			&=(2/s)^{d/2}\Gamma(d/2+1)(J_{d/2+2}(s)-J_{d/2+1}(s)/s)
			\end{aligned}
			\label{rho_second}
		\end{equation} 
		and another application of \Cref{lemma_bound_bessel} yields that $B_{11}=B_{20}=\mathcal{O}(1)$.
		\item $\bm{B_{21}}$: for any $x,x'\in\mathcal{X}$, defining $t=x-x'$, for every $q\in\mathbb{C}^d$, we have
		\begin{equation*}
			\begin{aligned}
			[q]K^{(12)}(x,x')&=\sum\limits_{i=1}^d q_i (\partial_i\nabla^2K(t))\\
			&=\left[\rho_{\sigma}''(|t|)-\frac{\rho_{\sigma}'(|t|)}{|t|}\right]\frac{1}{|t|^2}\left(tq^T+qt^T+\langle q,t\rangle\left(Id-3\frac{tt^T}{|t|^2}\right)\right)+\rho'''_{\sigma}(|t|)\frac{\langle q,t\rangle}{|t|}\frac{tt^T}{|t|^2}\,.
			\end{aligned}
		\end{equation*} 
		Hence  
		\begin{equation*}
			\begin{aligned}
				\left\|K^{(12)}(x,x')\right\|_{x,x'}&=\underset{|q|_x\leq 1,~|v_i|_x\leq 1}{\mathrm{sup}}~\left|v_1^T\left(\sum\limits_{i=1}^d q_i (\partial_i\nabla^2K(t))\right)v_2\right|\\
				&=\left(\frac{\sigma}{\sqrt{-\rho''(0)}}\right)^3 \underset{|q|\leq 1,~|v_i|\leq 1}{\mathrm{sup}}~\left|v_1^T\left(\sum\limits_{i=1}^d q_i (\partial_i\nabla^2K(t))\right)v_2\right|\\
				&\leq \left(\frac{\sigma}{\sqrt{-\rho''(0)}}\right)^3\left(6\left\|\frac{1}{|\cdot|}\left(\rho''_{\sigma}(|\cdot|)-\frac{\rho'_{\sigma}(|\cdot|)}{|\cdot|}\right)\right\|_{\infty} +\|\rho'''_{\sigma}\|_{\infty}\right)\,.
			\end{aligned}
		\end{equation*}
		Now, we notice that
		\begin{equation*}
			\begin{aligned}
				\left|\frac{1}{|t|}\left(\rho''_{\sigma}(|t|)-\frac{\rho'_{\sigma}(|t|)}{|t|}\right)\right|
				&=\sigma^{-3}\left|\frac{1}{\sigma^{-1}|t|}\left(\rho''(\sigma^{-1}|t|)-\frac{\rho'(\sigma^{-1}|t|)}{\sigma^{-1}|t|}\right)\right|\\
				&\leq \sigma^{-3}~\underset{s> 0}{\mathrm{sup}}\left|\frac{1}{s}\left(\rho''(s)-\frac{\rho'(s)}{s}\right)\right|
			\end{aligned}
		\end{equation*}
		and $\|\rho'''_{\sigma}\|_{\infty}=\sigma^{-3}\|\rho'''\|_{\infty}$, which yields
		\begin{equation*}
			\left\|K^{(12)}(x,x')\right\|_{x,x'}\leq (-\rho''(0))^{-3/2}\left(6~\underset{s> 0}{\mathrm{sup}}\left|\frac{1}{s}\left(\rho''(s)-\frac{\rho'(s)}{s}\right)\right|+\|\rho'''\|_{\infty}\right).
		\end{equation*}
		
		By \cref{rho_prime,rho_second} we get 
		\begin{equation}\label{eq:kpp}
			\begin{aligned}
				\rho''(s)-\rho'(s)/s&=(2/s)^{d/2}\Gamma(d/2+1)J_{d/2+2}(s).
			\end{aligned}
		\end{equation} 
		Moreover, using \cref{rho_second,bessel_derivative}, we obtain
		\begin{equation}
			\begin{aligned}
				\rho'''(s)&=(2/s)^{d/2}\Gamma(d/2+1)\bigg(-\frac{d}{2}\frac{J_{d/2+2}(s)}{s}+\frac{d}{2}\frac{J_{d/2+1}(s)}{s^2}+J_{d/2+2}'(s)-\frac{J_{d/2+1}'(s)}{s}+\frac{J_{d/2+1}(s)}{s^2}\bigg)\\
				&=(2/s)^{d/2}\Gamma(d/2+1)\bigg(-\frac{d}{2}\frac{J_{d/2+2}(s)}{s}+\Big(\frac{d}{2}+1\Big)\frac{J_{d/2+1}(s)}{s^2}+\frac{d/2+2}{s}J_{d/2+2}(s)-J_{d/2+3}(s)\\
				&\qquad\qquad\qquad\qquad\qquad~\,-\Big(\frac{d}{2}+1\Big)\frac{J_{d/2+1}(s)}{s^2}+\frac{J_{d/2+2}(s)}{s}\bigg)\\
				&=(2/s)^{d/2}\Gamma(d/2+1)\left[-J_{d/2+3}(s)+(3/s)J_{d/2+2}(s)\right].
			\end{aligned}
			\label{rho_third}
		\end{equation}
		Applying \Cref{lemma_bound_bessel} yields $B_{21}=\mathcal{O}(1)$.
		\item $\bm{B_{22}}$: first, we have: \begin{equation*}
	\begin{aligned}
			\|K^{(22)}(x,x')\|_{x,x'}&=\underset{|q_k|_x\leq 1,~|v_l|_x\leq 1}{\mathrm{sup}}\left|v_1^T \left(\sum\limits_{i,j=1}^d q_{1,i}q_{2,j} \partial_{ij}\nabla^2 K(t)\right)v_2\right|\\
			&=\left(\frac{\sigma}{\sqrt{-\rho''(0)}}\right)^4 \underset{|q_k|_2\leq 1}{\mathrm{sup}}\left\|\sum\limits_{i,j=1}^d q_{1,i}q_{2,j}\partial_{ij}\nabla^2 K(t)\right\|_2.
			\end{aligned}
		\end{equation*}
		Then, we recall that $\partial_i \nabla^2K(t)=a_i(t)b_i(t)+c_i(t)$ with
		\begin{equation*}
			\begin{aligned}
			a_i(t)&\eqdef\frac{1}{|t|^2}\left[\rho_{\sigma}''(|t|)-\frac{\rho_{\sigma}'(|t|)}{|t|}\right],\\
			b_i(t)&\eqdef\left(te_i^T+e_it^T+t_i\left(Id-3\frac{tt^T}{|t|^2}\right)\right),\\
			c_i(t)&\eqdef\rho'''_{\sigma}(|t|)\frac{t_i}{|t|}\frac{tt^T}{|t|^2}\,.
			\end{aligned}
		\end{equation*}
		Denoting by $E_{ij}\in\RR^{d\times d}$ the matrix with entry $1$ at row $i$ and column $j$ and zero otherwise, we have
		\begin{equation*}
			\begin{aligned}
				\partial_j a_i(t)&=\frac{1}{|t|^2}\left[\rho_{\sigma}'''(|t|)\frac{t_j}{|t|}-\frac{t_j}{|t|^2}\left(\rho''_{\sigma}(|t|)-\frac{\rho'_{\sigma}(|t|)}{|t|}\right)\right]-2\frac{t_j}{|t|^4}\left[\rho_{\sigma}''(|t|)-\frac{\rho_{\sigma}'(|t|)}{|t|}\right]\\
				&=\frac{t_j}{|t|^3}\left[\rho_{\sigma}'''(|t|)-\frac{1}{|t|}\left(\rho''_{\sigma}(|t|)-\frac{\rho'_{\sigma}(|t|)}{|t|}\right)-\frac{2}{|t|}\left(\rho_{\sigma}''(|t|)-\frac{\rho_{\sigma}'(|t|)}{|t|}\right)\right]\\
				&=\frac{t_j}{|t|^3}\left[\rho_{\sigma}'''(|t|)-\frac{3}{|t|}\left(\rho''_{\sigma}(|t|)-\frac{\rho'_{\sigma}(|t|)}{|t|}\right)\right],\\
			\partial_j b_i(t)&=E_{ji}+E_{ij}+\delta_{ij}\left(Id-3\frac{tt^T}{|t|^2}\right)-3\frac{t_i}{|t|^2}\left(t e_j^T+e_jt^T-2t_j\frac{tt^T}{|t|^2}\right),\\
			\partial_jc_i(t)&=~~~\,\rho^{(4)}_{\sigma}(|t|)\frac{t_i t_j}{|t|^2}\frac{tt^T}{|t|^2}\\
		&~~~\,+\rho'''_{\sigma}(|t|)\left[\left(\delta_{ij}-\frac{t_i t_j}{|t|^2}\right)\frac{1}{|t|}\frac{tt^T}{|t|^2}+\frac{t_i}{|t|}\frac{1}{|t|^2}\left(te_j^T+e_jt^T+t_j\left(Id-3\frac{tt^T}{|t|^2}\right)\right)\right],
			\end{aligned}
		\end{equation*}
	and hence
		\begin{equation*}
			\begin{aligned}
				&\sum\limits_{i,j=1}^d q_{1,i}q_{2,j}(\partial_j a_i(t))b_i(t)=\frac{\langle q_2,t\rangle}{|t|^3}\left[\rho_{\sigma}'''(|t|)-\frac{3}{|t|}\left(\rho''_{\sigma}(|t|)-\frac{\rho'_{\sigma}(|t|)}{|t|}\right)\right]\left(tq_1^T+q_1t^T+\langle q_1,t\rangle\left(Id-3\frac{tt^T}{|t|^2}\right)\right),\\
				&\sum\limits_{i,j=1}^d q_{1,i}q_{2,j}a_i(t)(\partial_j b_i(t))=a_i(t)\left(q_1q_2^T+q_2q_1^T+\langle q_1,q_2\rangle\left(Id-3\frac{tt^T}{|t|^2}\right)-3\frac{\langle q_1,t\rangle}{|t|^2}\left(tq_2^T+q_2t^T-2\langle q_2,t\rangle \frac{tt^T}{|t|^2}\right)\right),\\
				&\sum\limits_{i,j=1}^d q_{1,i} q_{2,j}\partial_jc_i(t)=\rho^{(4)}_{\sigma}(|t|)\frac{\langle q_1,t\rangle \langle q_2,t\rangle}{|t|^2}\frac{tt^T}{|t|^2}\\
				&+\rho'''_{\sigma}(|t|)\left[\left(\langle q_1,q_2\rangle-\frac{\langle q_1,t\rangle \langle q_2,t\rangle}{|t|^2}\right)\frac{1}{|t|}\frac{tt^T}{|t|^2}+\frac{\langle q_1,t\rangle}{|t|}\frac{1}{|t|^2}\left(t q_2^T+q_2t^T+\langle q_2,t\rangle\left(Id-3\frac{tt^T}{|t|^2}\right)\right)\right].
			\end{aligned}
		\end{equation*}
 		This ultimately yields
		\begin{equation*}
			\begin{aligned}
				\underset{|q_k|_2\leq 1}{\mathrm{sup}}\left\|\sum\limits_{i,j=1}^d q_{1,i} q_{2,j}(\partial_j a_i(t))b_i(t)\right\|_2&\leq 6\left\|\frac{1}{|\cdot|}\left(\rho'''_{\sigma}(|\cdot|)-\frac{3}{|\cdot|}\left(\rho''_{\sigma}(|\cdot|)-\frac{\rho'_{\sigma}(|\cdot|)}{|\cdot|}\right)\right)\right\|_{\infty}\\
				&\leq 6\,\sigma^{-4}\left(\underset{s> 0}{\mathrm{sup}}\left|\frac{\rho'''(s)}{s}\right|+3~\underset{s> 0}{\mathrm{sup}}\left|\frac{1}{s^2}\left(\rho''(s)-\frac{\rho'(s)}{s}\right)\right|\right),\\
				\underset{|q_k|_2\leq 1}{\mathrm{sup}}\left\|\sum\limits_{i,j=1}^d q_{1,i} q_{2,j}a_i(t)\partial_jb_i(t)\right\|_2&\leq 18\left\|\frac{1}{|\cdot|^2}\left(\rho''_{\sigma}(|\cdot|)-\frac{\rho'_{\sigma}(|\cdot|)}{|\cdot|}\right)\right\|_{\infty}\\
				&=18\,\sigma^{-4}\,\underset{s>0}{\mathrm{sup}}\left|\frac{1}{s^2}\left(\rho''(s)-\frac{\rho'(s)}{s}\right)\right|,\\
				\underset{|q_k|_2\leq 1}{\mathrm{sup}}\left\|\sum\limits_{i,j=1}^d q_{1,i} q_{2,j}\partial_jc_i(t)\right\|_2&\leq \|\rho^{(4)}_{\sigma}\|_{\infty}+8\left\|\frac{\rho'''_{\sigma}(|\cdot|)}{|\cdot|}\right\|_{\infty}\\
				&=\sigma^{-4}\left(\|\rho^{(4)}\|_{\infty}+\underset{s> 0}{\mathrm{sup}}\left|\frac{\rho'''(s)}{s}\right|\right),\\
			\end{aligned}
		\end{equation*}
		and we finally obtain
		\begin{equation*}
			B_{22}\leq (-\rho''(0))^{-2}\left(\|\rho^{(4)}\|_{\infty}+7~\underset{s> 0}{\mathrm{sup}}\left|\frac{\rho'''(s)}{s}\right|+36~\underset{s> 0}{\mathrm{sup}}\left|\frac{1}{s^2}\left(\rho''(s)-\frac{\rho'(s)}{s}\right)\right| \right).
		\end{equation*}
		In view of \cref{eq:kpp,rho_third}, another application of \Cref{lemma_bound_bessel} therefore yields $B_{22}=\mathcal{O}(1)$. \qedhere
	\end{itemize}
\end{proof}

\subsection{Near region bound}
\begin{lemma}
	For all $\mathfrak{d}_{\mathfrak{g}}(x,x')\leq r_{\mathrm{near}}\eqdef1/\sqrt{5}$ and all $v\in\mathbb{C}^d$ we have $-K^{(02)}(x,x')[v,v]\geq \overline{\epsilon}_2\,|v|_x^2$ where~${\overline{\epsilon}_2\eqdef 0.6<r_{\rm near}^{-2}}$.
\end{lemma}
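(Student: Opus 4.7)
Write $t = x - x'$ and $s = \sigma^{-1}|t|$. The hypothesis $\mathfrak{d}_{\mathfrak{g}}(x,x') \le 1/\sqrt 5$ becomes $s \le \sqrt{(d+2)/5}$, which is at most $2/\sqrt 5$ when $d=2$ and at most $1$ when $d=3$. From the formula
\[
\nabla^2 K(t)=\frac{\rho'_{\sigma}(|t|)}{|t|}\Big(Id-\frac{tt^{T}}{|t|^{2}}\Big)+\rho''_{\sigma}(|t|)\frac{tt^{T}}{|t|^{2}}
\]
recalled in \Cref{sec_deriv_kernel}, the matrix $\nabla^2 K(t)$ is real symmetric and $K^{(02)}(x,x')[v,v]$ is the associated quadratic form. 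After the rescaling $\rho'_\sigma(|t|)/|t|=\sigma^{-2}\rho'(s)/s$ and $\rho''_\sigma(|t|)=\sigma^{-2}\rho''(s)$, decomposing $v$ along $t/|t|$ and its orthogonal complement (and handling the complex case by splitting $v$ into real and imaginary parts, since $\nabla^2 K(t)$ is real) shows
\[
-K^{(02)}(x,x')[v,v]= \sigma^{-2}\bigl[(-a(s))(|v|^{2}-c)+(-b(s))\,c\bigr],\qquad a(s)\eqdef \rho'(s)/s,\;b(s)\eqdef\rho''(s),
\]
with $c\in[0,|v|^2]$. Since $|v|_x^2=\sigma^{-2}|v|^2/(d+2)$, the claim reduces to verifying
\[
\min\bigl(-a(s),\,-b(s)\bigr)\ \ge\ \frac{\overline{\epsilon}_2}{d+2}\ =\ \frac{0.6}{d+2}\qquad\text{for all } s\in\bigl(0,\sqrt{(d+2)/5}\bigr].
\]

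The next step is to control $-a(s)$ and $-b(s)$ via Bessel functions. Using \eqref{rho_prime} and \eqref{rho_second},
\[
-a(s)=\frac{(2/s)^{d/2}\Gamma(d/2+1)}{s}J_{d/2+1}(s),\qquad -b(s)=(2/s)^{d/2}\Gamma(d/2+1)\Big(\frac{J_{d/2+1}(s)}{s}-J_{d/2+2}(s)\Big),
\]
and both expressions tend to $1/(d+2)$ as $s\to 0^+$. I would apply the upper/lower bounds on $J_{d/2+1}(s)$ and $J_{d/2+2}(s)$ provided by \Cref{lemma_bound_bessel} (the two-term truncation of the series plus a controlled tail involving $e^{s^2/4}-1-s^2/4$) to obtain explicit, elementary lower bounds on $-a(s)$ and $-b(s)$ of the form $\tfrac{1}{d+2}\bigl(1-\tfrac{s^2}{2(d+4)}-R(s)\bigr)$ with $R(s)$ explicitly bounded.

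Finally, the two inequalities $-a(s)\ge 0.6/(d+2)$ and $-b(s)\ge 0.6/(d+2)$ are checked separately in the cases $d=2$ (on $[0,2/\sqrt 5]$) and $d=3$ (on $[0,1]$). Since the resulting polynomial-plus-exponential lower bounds are monotone decreasing on the relevant range, it suffices to evaluate them at the right endpoint $s_{\max}=\sqrt{(d+2)/5}$ and verify numerically that they exceed $0.6/(d+2)$.

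\textbf{Main obstacle.} The quantitative part is the real difficulty: the ratio $0.6/1=\overline{\epsilon}_2/(-\rho''(0)\cdot(d+2))$ leaves only a moderate safety margin, so the Bessel estimate coming from \Cref{lemma_bound_bessel} must be applied carefully, in particular keeping the first subleading term $-s^2/4(\alpha+1)$ and using the remainder $e^{s^2/4}-1-s^2/4$ only for the higher-order tail. Once the two explicit lower bounds are tabulated at $s_{\max}$, the verification is a short numerical check for each of $d=2,3$.
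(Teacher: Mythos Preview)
Your approach is correct and essentially the same as the paper's: both compute $-v^{T}\nabla^{2}K(t)v$ from the explicit Hessian formula, rescale to reduce to a lower bound on $-\rho'(s)/s$ and $-\rho''(s)$ for $s\le\sqrt{(d+2)/5}\le 1$, and then use the Bessel tail estimates of \Cref{lemma_bound_bessel} to verify the numerical constant $0.6$. The only cosmetic difference is that you bound the two eigenvalues $-a(s)$ and $-b(s)$ separately, whereas the paper uses the single combined lower bound $-a(s)-|b(s)-a(s)|\le\min(-a(s),-b(s))$, controlling $|\rho'(s)/s-\rho''(0)|$ and $|\rho''(s)-\rho'(s)/s|$ directly; this lets the paper handle $d=2,3$ in one stroke via $s\le 1$ rather than evaluating at dimension-specific endpoints.
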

\begin{proof}
	Let $r>0$ and $x,x'$ be such that $\mathfrak{d}_{\mathfrak{g}}(x,x')\leq r$. For any $v\in\mathbb{C}^d$, defining $t=x-x'$, we have
	\begin{equation*}
		\begin{aligned}
			-K^{(02)}(x,x')[v,v]&=-v^T \nabla^2 K(t)v\\
			&=-\frac{\rho'(\sigma^{-1}|t|)}{\sigma^{-1}|t|}\sigma^{-2}|v|^2-\left[\sigma^{-2}\rho''(\sigma^{-1}|t|)-\sigma^{-1}\frac{\rho'(\sigma^{-1}|t|)}{|t|}\right]\frac{(v\cdot t)^2}{|t|^2}\\
			&\geq\left(-\frac{\rho'(\sigma^{-1}|t|)}{\sigma^{-1}|t|}-\left|\rho''(\sigma^{-1}|t|)-\frac{\rho'(\sigma^{-1}|t|)}{\sigma^{-1}|t|}\right|\right)\sigma^{-2}|v|^2\\
			&\geq \overline{\epsilon}_2(r)\,|v|_x^2,
		\end{aligned}
	\end{equation*}
	where 
 \begin{equation*}
     \begin{aligned}
     \overline{\epsilon}_2(r)\eqdef&\frac{1}{(-\rho''(0))}\,\mathrm{inf}\left\{-\frac{\rho'(s)}{s}-\left|\rho''(s)-\frac{\rho'(s)}{s}\right|,~s\sqrt{-\rho''(0)}\in (-r,r)\right\}\\
     =&\frac{1}{(-\rho''(0))}\,\mathrm{inf}\left\{-\rho''(0)+\rho''(0)-\frac{\rho'(s)}{s}-\left|\rho''(s)-\frac{\rho'(s)}{s}\right|,~s\sqrt{-\rho''(0)}\in (-r,r)\right\}.
     \end{aligned}
\end{equation*}

	Proceeding as in the proof of \Cref{lemma_bound_bessel}, we obtain:
	\begin{equation*}
		\left|\frac{\rho'(s)}{s}-\rho''(0)\right|\leq (-\rho''(0))\frac{2(e^{s^2/4}-1)}{d+4}~~\mathrm{and}~~\left|\rho''(s)-\frac{\rho'(s)}{s}\right|\leq (-\rho''(0))\frac{s^2 e^{s^2/4}}{d+4}\,.
	\end{equation*}
	If $s\leq 1$ then 
	\begin{equation*}
		\frac{2(e^{s^2/4}-1)}{d+4}\leq \frac{e^{1/4}-1}{3}\leq 0.1~~\mathrm{and}~~\frac{s^2e^{s^2/4}}{d+4}\leq \frac{e^{1/4}}{6}\leq 0.3\,,
	\end{equation*}
	which shows $\overline{\epsilon}_2(r)\geq 0.6$ as soon as $r\leq 1/\sqrt{5}\leq 1/\sqrt{d+2}$.
\end{proof}

\subsection{Far region bound}

\begin{lemma}
	For all $\mathfrak{d}_{\mathfrak{g}}(x,x')\geq r_{near}$ we have $|K(x,x')|\leq 1 - \overline{\epsilon}_0$ where $\overline{\epsilon}_0\eqdef 0.07$.
\end{lemma}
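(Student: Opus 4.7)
The plan is to use the rotational symmetry $K(x,x')=\rho_\sigma(|x-x'|)=\rho(\sigma^{-1}|x-x'|)$ to reduce to a one-dimensional inequality. Setting $s=\sigma^{-1}|x-x'|$, the metric definition gives $\mathfrak{d}_{\mathfrak{g}}(x,x')=s\sqrt{-\rho''(0)}=s/\sqrt{d+2}$, so the hypothesis $\mathfrak{d}_{\mathfrak{g}}(x,x')\ge r_{\rm near}=1/\sqrt{5}$ is equivalent to $s\ge s_0\eqdef\sqrt{(d+2)/5}$, i.e.\ $s_0=2/\sqrt{5}$ when $d=2$ and $s_0=1$ when $d=3$. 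It suffices therefore to prove
\begin{equation*}
|\rho(s)|\le 0.93\qquad\text{for all } s\ge s_0,~d\in\{2,3\}.
\end{equation*}

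The idea is to split $[s_0,+\infty)$ at a threshold $s^\star$ depending on $d$ and to apply on each piece one of the two bounds supplied by \Cref{lemma_bound_bessel}. On the compact sub-interval $[s_0,s^\star]$ the Taylor-type estimate of $J_{d/2}$ yields
\begin{equation*}
|\rho(s)|\le \left|1-\frac{s^2}{2(d+2)}\right|+\frac{1}{(d/2+1)(d/2+2)}\left(e^{s^2/4}-1-\frac{s^2}{4}\right).
\end{equation*}
The right-hand side starts near $1-s_0^2/(2(d+2))=0.9$ at $s=s_0$ and, provided $s^\star$ is not too large, the exponential correction is controlled; in particular at $s=s_0$ the bound is $\le 0.904$ in both dimensions. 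On the unbounded interval $[s^\star,+\infty)$, I would instead use the decay estimate $|J_\alpha(s)|\le 0.8\,s^{-1/3}$, which gives
\begin{equation*}
|\rho(s)|\le 0.8\cdot 2^{d/2}\Gamma(d/2+1)\,s^{-(d/2+1/3)},
\end{equation*}
a quantity which is decreasing in $s$ so that it is enough to check it at $s=s^\star$. Concretely, $s^\star=2.5$ works for $d=2$ (Taylor $\le 0.59$, decay $\le 0.48$) and a slightly larger value such as $s^\star=3$ works for $d=3$.

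The main obstacle is picking $s^\star$ so that \emph{both} bounds stay below $0.93$ simultaneously: push $s^\star$ too far and the exponential term $e^{(s^\star)^2/4}$ in the Taylor estimate overshoots; push it too close to $s_0$ and the polynomial decay bound has not yet fallen below $0.93$. The existence of a valid window is not automatic, but the computations above show that the two admissible regions do overlap for $d\in\{2,3\}$. What remains is a short but careful numerical verification on the compact piece $[s_0,s^\star]$, which can be made rigorous either by pointwise evaluation at a fine enough grid (using a crude Lipschitz estimate on the Taylor upper bound) or by showing that the upper bound is monotone on the relevant sub-intervals.
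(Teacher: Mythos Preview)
Your proposal is correct and follows essentially the same approach as the paper: reduce to a one-dimensional bound on $|\rho(s)|$ for $s\ge s_0$, split at a threshold $s^\star$, apply the Taylor-type estimate from \Cref{lemma_bound_bessel} on the compact piece and the $0.8\,s^{-1/3}$ decay on the tail. The paper uses the single threshold $s^\star=5/2$ for both dimensions (via the uniform bound $s_0\ge 2/\sqrt{5}$) and finishes the compact piece by the monotonicity argument you mention as an option: the Taylor upper bound $b$ is decreasing on $[2/\sqrt{5},\,2\sqrt{\log(3+d/2)}]$ and increasing on $[2\sqrt{\log(3+d/2)},\,5/2]$, so checking the two endpoints $b(2/\sqrt{5})\le 0.93$ and $b(5/2)\le 0.63$ suffices.
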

\begin{proof}
	If $\mathfrak{d}_{\mathfrak{g}}(x,x')=\sigma^{-1}|x-x'|/\sqrt{d+2}\geq r_{near}=1/\sqrt{5}$ then $\sigma^{-1}|x-x'|\geq r_{near}\sqrt{d+2}\geq 2/\sqrt{5}$. We hence need to prove that $\rho(s)\leq 1-\overline{\epsilon_0}$ for every $s\geq 2/\sqrt{5}$. Using \Cref{lemma_bound_bessel}, we obtain, for every $s>0$:
	\begin{equation*}
		\rho(s)\leq 0.8\frac{2^{d/2}\Gamma(d/2+1)}{s^{1/3+d/2}}\leq 0.8 \frac{2^{3/2}\Gamma(3/2+1)}{s^{4/3}}\,.
	\end{equation*}
	The right hand side is smaller than $0.9$ on $(5/2,+\infty)$, but is not always smaller than $1$ on $[2/\sqrt{5},5/2]$. On~$[2/\sqrt{5},5/2]$, we use the other bound provided by \Cref{lemma_bound_bessel}, namely:
	\begin{equation*}
		\rho(s)\leq b(s)\eqdef 1-\frac{1}{d/2+1}\frac{s^2}{4}+\frac{1}{(d/2+1)(d/2+2)}\left(e^{s^2/4}-1-\frac{s^2}{4}\right)\,.
	\end{equation*}
	We notice that $b$ is decreasing on $[2/\sqrt{5},2\sqrt{\mathrm{log}(3+d/2)}]$ and increasing on $[2\sqrt{\mathrm{log}(3+d/2)},5/2]$. Moreover, we have $b(2/\sqrt{5})\leq 0.93$ and ${b(5/2)\leq 0.63}$. Hence, we finally obtain $\rho(s)\leq 0.93$ for every $s\geq 2/\sqrt{5}$.
\end{proof}

We now bound the kernel width $W(h,s)$ of $K$.
\begin{lemma}
	If $h=\mathcal{O}(1)$ then $W(h,s)=\mathcal{O}(s^{2/(d+1)})$.
\end{lemma}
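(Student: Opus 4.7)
The plan is to combine the explicit formulae for the covariant derivatives of $K$ derived in \Cref{sec_deriv_kernel} with the Landau-type decay bound for Bessel functions from \Cref{lemma_bound_bessel}, and then perform a standard volume-packing argument over dyadic shells in the Fisher metric.

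First, I would verify that, for each pair $(i,j) \in \{0,1\}\times\{0,2\}$, the operator norm $\|K^{(ij)}(x,x')\|_{x,x'}$ is controlled by $\mathfrak{d}_{\mathfrak{g}}(x,x')^{-\beta}$ with $\beta \eqdef d/2 + 1/3$, uniformly in $\sigma$, once $\mathfrak{d}_{\mathfrak{g}}(x,x')$ is bounded away from zero. By \eqref{k_00}--\eqref{k_20} and the bound on $\|K^{(12)}\|_{x,x'}$ already computed in the proof of \Cref{lemma_uniform_bounds}, each such norm is dominated by a fixed linear combination of $|\rho|,|\rho'|,|\rho''|,|\rho'''|$ and their quotients by $s\eqdef \sigma^{-1}|x-x'|$. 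The explicit formulae \eqref{rho_prime}, \eqref{rho_second}, \eqref{eq:kpp}, \eqref{rho_third} show that each of these quantities is $s^{-d/2}$ times a Bessel function $J_\nu(s)$ with $\nu \in \{d/2, d/2+1, d/2+2, d/2+3\}$, possibly divided by further powers of $s$. Applying $|J_\nu(s)| \leq 0.8\, s^{-1/3}$ from \Cref{lemma_bound_bessel} and translating to the Fisher metric via $\mathfrak{d}_{\mathfrak{g}}(x,x')=\sqrt{-\rho''(0)}\,\sigma^{-1}|x-x'|$ yields the claimed decay, with the $\sigma$-dependence cancelling thanks to the Fisher-metric normalization.

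Next, I would use a standard packing argument. For any $(x_k)_{k=1}^s \in \mathcal{S}_{\Delta}$, the open Fisher balls of radius $\Delta/2$ around the $x_k$'s are pairwise disjoint, so the number of indices $k$ with $\mathfrak{d}_{\mathfrak{g}}(x_1,x_k) \in [2^j\Delta, 2^{j+1}\Delta]$ is at most $C\,2^{jd}$ for some absolute constant $C$. Decomposing the sum over these dyadic shells and inserting the pointwise decay from the first step gives
\[
\sum_{k=2}^s \|K^{(ij)}(x_1,x_k)\|_{x_1,x_k} \lesssim \Delta^{-\beta}\sum_{j=0}^{j_{\mathrm{max}}} 2^{j(d-\beta)},
\]
where $j_{\mathrm{max}}$ is determined by the constraint that the total count is at most $s$, i.e.\ $2^{j_{\mathrm{max}}}\sim s^{1/d}$. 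Since $\beta = d/2+1/3 < d$ for $d \in \{2,3\}$, the right-hand side is of order $s^{(d-\beta)/d}\,\Delta^{-\beta}$. Imposing this to be $\leq h$ amounts to $\Delta \gtrsim (1/h)^{1/\beta}\,s^{(d-\beta)/(d\beta)}$. A direct arithmetic check verifies that $(d-\beta)/(d\beta) \leq 2/(d+1)$ for $\beta = d/2+1/3$ and $d \in \{2,3\}$ (equivalently, $\beta \geq d(d+1)/(3d+1)$), so choosing $\Delta = C_h\,s^{2/(d+1)}$ is more than sufficient, and yields $W(h,s) = \mathcal{O}(s^{2/(d+1)})$.

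The main technical obstacle is in the first step: several terms in the expressions of $K^{(11)}$ and $K^{(12)}$ naively look as though they could decay more slowly than $s^{-(d/2+1/3)}$. However, every such subdominant term in fact carries an additional $1/s$ factor relative to the leading one, hence decays even faster, so the rate is governed by the common $s^{-d/2}|J_{\nu}(s)|$. A minor point to address separately is the regime where $\mathfrak{d}_{\mathfrak{g}}(x_1,x_k)$ is not yet large enough for the asymptotic decay to apply: by the $\Delta$-separation, at most $\mathcal{O}(1)$ indices can contribute there once $\Delta$ is of the order claimed, so their effect is absorbed in the constants.
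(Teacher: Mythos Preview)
Your argument is correct, but it takes a more elaborate route than the paper's. The paper uses the classical Bessel asymptotic $J_\nu(t)=\mathcal{O}(t^{-1/2})$ (rather than the Landau bound $|J_\nu(t)|\le 0.8\,t^{-1/3}$ from \Cref{lemma_bound_bessel}) to obtain the sharper pointwise decay $\|K^{(ij)}(x,x')\|_{x,x'}\lesssim (\sigma^{-1}|x-x'|)^{-(d+1)/2}$. It then dispenses with any packing argument entirely: since every $x_k$ satisfies $\mathfrak{d}_{\mathfrak{g}}(x_1,x_k)\ge\Delta$, each term in the sum is at most $\Delta^{-(d+1)/2}$, and there are at most $s-1$ of them, so the sum is $\lesssim s\,\Delta^{-(d+1)/2}$, which is $\mathcal{O}(1)$ precisely when $\Delta\gtrsim s^{2/(d+1)}$.

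The comparison is instructive. Your weaker decay exponent $\beta=d/2+1/3$ would, with the paper's crude summation, only give $W(h,s)=\mathcal{O}(s^{1/\beta})$, which is strictly worse; you compensate by invoking the mutual $\Delta$-separation via the volume-packing argument, and in fact your final exponent $(d-\beta)/(d\beta)$ is strictly \emph{smaller} than $2/(d+1)$ for $d\in\{2,3\}$. So your route trades a sharper pointwise estimate for a sharper combinatorial one, and ends up with a nominally better bound that you then relax to match the statement. The paper's choice of the decay exponent $(d+1)/2$ is exactly the one that makes the trivial summation tight, yielding a two-line proof. Your approach would be the natural one if only the uniform bound of \Cref{lemma_bound_bessel} were available, or if one wanted to improve the dependence on $s$ in the minimal-separation condition (cf.\ \Cref{remark_sampling}).
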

\begin{proof}
	From the computations conducted in the proof of \Cref{lemma_uniform_bounds} and the fact $J_{d/2}(t)=\mathcal{O}(t^{-1/2})$ as~$t\to+\infty$, we obtain that, for every $i,j\in\{0,1,2\}$ with $i+j\leq 3$,
	\begin{equation*}
		\left\|K^{(ij)}(x,x')\right\|_{x,x'}\leq \frac{1}{(\sigma^{-1}t)^{(d+1)/2}}
	\end{equation*}
	as soon as $\sigma^{-1}\|x-x'\|\gtrsim t$. As a result, provided $\Delta\gtrsim s^{2/(d+1)}$, for every $(x_k)_{k=1}^s\in\mathcal{S}_{\Delta}$ we have
\begin{equation*}
\sum\limits_{k=2}^s \left\|K^{(ij)}(x_1,x_k)\right\|_{x_1,x_k}\lesssim \sum\limits_{k=2}^s \frac{1}{s}=\mathcal{O}(1)\,.
\end{equation*}
\end{proof}

\subsection{Gradient bounds}
For $j\in\{0,1,2\}$, we have $L_j(\omega)=\|D_j[\varphi_{\omega}](x)\|_x=\big(\sigma\|\omega\|_2/\sqrt{-\rho''(0)}\big)^j$. Moreover, using the notations of~\cite[Section 5.1]{poonGeometryOfftheGridCompressed2023}, we have
\begin{equation*}
	\begin{aligned}
	\|D_2[\varphi_{\omega}](x)-D_2[\varphi_{\omega}](x')[\tau_{x\to x'}\cdot,\tau_{x\to x'}\cdot]\|_x&=\frac{\sigma^{2}}{(-\rho''(0))}\|\omega\|_{2}^2\,|e^{-i\langle \omega,x\rangle}-e^{-i\langle \omega,x'\rangle}|\\
	&\leq \frac{\sigma^{2}}{(-\rho''(0))}\|\omega\|_{2}^2\,|\langle \omega,x-x'\rangle|\\
	&\leq \frac{\sigma^{2}}{(-\rho''(0))}\|\omega\|_{2}^3\,|x-x'|\\
	&=\big(\sigma\|\omega\|_2/\sqrt{-\rho''(0)}\big)^3\,\mathfrak{d}_{\mathfrak{g}}(x,x')\,.
	\end{aligned}
\end{equation*}
This yields $L_3(\omega)\leq \big(\sigma\|\omega\|_2/\sqrt{-\rho''(0)}\big)^3$. Since $\omega\sim\mathcal{U}(B(0,\sigma^{-1}))$, we have $\sigma\|\omega\|_2\leq 1$, and hence $$\|L_j\|_{\infty}\leq (-\rho''(0))^{-j/2}$$
for every $j\in\{0,...,3\}$. We can therefore set $\overline{L}_j=(-\rho''(0))^{-j/2}=\mathcal{O}(1)$ (see point (i) in the remark below Assumption 2 in \cite[Section 5.2]{poonGeometryOfftheGridCompressed2023}).
\end{document}